\theoremstyle{definition}
\newtheorem{definition}{Definition}[section]
\theoremstyle{plain}
\newtheorem{corollary}[definition]{Corollary}
\newtheorem{lemma}[definition]{Lemma}
\newtheorem{proposition}[definition]{Proposition}
\newtheorem{theorem}[definition]{Theorem}
\numberwithin{equation}{section}
\newcommand*\commgraph[1]{\mathcal{G}(#1)}
\newcommand*\extendedcommgraph[1]{\mathcal{G}^*(#1)}
\newcommand*\Rees[4]{\mathcal{M}[#1; \allowbreak #2,\allowbreak #3; \allowbreak #4]}
\newcommand*\diam[1]{\mathrm{diam}(#1)}
\newcommand*\girth[1]{\mathrm{girth}(#1)}
\newcommand*\knitdegree[1]{\mathrm{kd}(#1)}
\newcommand*\cliquenumber[1]{\omega(#1)}
\newcommand*\chromaticnumber[1]{\chi(#1)}
\newcommand*\graphjoin{\mathbin{\nabla}}
\newcommand*\defterm{\emph}
\DeclarePairedDelimiter{\abs}{\lvert}{\rvert}
\DeclarePairedDelimiter{\parens}{\lparen}{\rparen}
\DeclarePairedDelimiter{\bracks}{\lbrack}{\rbrack}
\DeclarePairedDelimiter{\braces}{\{}{\}}
\DeclarePairedDelimiter{\set}{\{}{\}}
\DeclarePairedDelimiterX{\gset}[2]{\{}{\}}{\,#1:#2\,}
\tikzset{
  x=10mm,
  y=10mm,
  treenode/.style={
    circle,
    minimum size=1.3mm,
    inner sep=0,
    fill=black,
  },
  edgelabel/.style={
    node font=\small,
    inner sep =.5mm,
  },
  columnlabel/.style={
    anchor=north,
  }
}
\tikzset{
  vertex/.style={
    circle,
    minimum size=3mm,
    fill,
    inner sep=0,
    outer sep=0,
  },
  edge/.style={
    line width=.5mm,
  }
}
\begin{document}

\title{Commuting graphs of completely simple semigroups}

\author{Tânia Paulista}
\address[T. Paulista]{%
Center for Mathematics and Applications (NOVA Math) \& Department of Mathematics\\
NOVA School of Science and Technology\\
NOVA University of Lisbon\\
2829--516 Caparica\\
Portugal
}
\email{%
t.paulista@campus.fct.unl.pt
}
\thanks{This work is funded by national funds through the FCT -- Fundação para a Ciência e a Tecnologia, I.P., under the scope of the projects UIDB/00297/2020 and UIDP/00297/2020 (Center for Mathematics and Applications). The author is also funded by national funds through the FCT -- Fundação para a Ciência e a Tecnologia, I.P., under the scope of the studentship 2021.07002.BD}

\thanks{The author is also thankful to her supervisors António Malheiro and Alan J. Cain for all the support, encouragement and guidance}

\subjclass[2020]{Primary 05C25; Secondary  05C12, 05C15, 05C38, 05C40, 20M14}

\begin{abstract}
We describe the commuting graph of a Rees matrix semigroup over a group and investigate its properties: diameter, clique number, girth, chromatic number and knit degree. The maximum size of a commutative subsemigroup of a Rees matrix semigroup over a group is presented, and its largest commutative subsemigroups are exhibited. We use the knowledge we obtained from the commuting graph of this semigroup construction to deduce results regarding the properties of commuting graphs of completely simple semigroups. We also characterize the graphs that arise as commuting graphs of completely simple semigroups. In the process of obtaining these results we are also able to restrict the possible values for some properties of commuting graphs of groups.
\end{abstract}

\maketitle

\section{Introduction}

The notion of commuting graph was initially introduced exclusively for groups. The commuting graphs of several groups were studied in terms of their properties; for example, the symmetric and the alternating groups \cite{Commuting_graph_I(X), Symmetric_group, Diameter_commuting_graph_symmetric_group, Commuting_graph_symmetric_alternating_groups, Alternating_group}. The study of the combinatorial structure of the commuting graph of a group can provide information regarding the group --- for instance, the clique number of the commuting graph of a group is equal to the difference between the maximum size of an abelian subgroup of the given group and the number of central elements of that group. For this reason, commuting graphs have been useful in proving group theoretical questions (see, for example, \cite{Importance_commuting_graphs_1, Importance_commuting_graphs_2, Importance_commuting_graphs_3}).

Given how helpful commuting graphs turned out to be for groups, the notion ended up being extended to semigroups. Just as happens with groups, the commuting graph of a semigroup also allows a better understanding of the algebraic structure of a semigroup.  Some authors have already determined some properties of the commuting graphs of some important semigroups, such as the semigroup of full transformations \cite{Commuting_graph_T(X)}, and the symmetric inverse semigroup \cite{Commuting_graph_I(X)}. Commuting graphs of semigroups have also been used to solve some semigroup problems. For instance, Araújo, Kinyon and Konieczny \cite{Commuting_graph_T(X)} introduced the notion of left paths and knit degree (in commuting graphs) in order to give an answer to a conjecture that Schein established in the process of determining a characterization for $r$-semisimple bands \cite{Schein_conjecture}.

In this paper we consider the class of completely simple semigroups and investigate its commuting graphs. It is well known that the class of completely simple semigroups includes groups, and that these semigroups are generally close to groups. In fact, completely simple semigroups share some properties with groups: all the elements of a completely simple semigroup lie in some subgroup of the semigroup and its $\mathcal{H}$-classes are groups. Furthermore, completely simple semigroups are simple, which means that their unique ideal is the semigroup itself, and all their idempotents are minimal. In order to know more about the commuting graphs of (non-group) completely simple semigroups, we examine the commuting graph of a semigroup construction called the Rees matrix semigroup over a group, which is used to characterize completely simple semigroups. Our aim is to determine some of its properties (namely its diameter, clique number, girth, chromatic number and knit degree) and then use that information to deduce properties of the commuting graphs of completely simple semigroups. More specifically, we are interested in determining the values that can be achieved by those properties when we consider the commuting graph of a completely simple semigroup.

Some authors have shown interest in determining the possible values for some properties of the commuting graph of a semigroup/group. In 2011 Araújo, Kinyon and Konieczny \cite{Commuting_graph_T(X)} proved that for every positive integer $n\geqslant 2$ there exists a semigroup whose commuting graph has diameter $n$. Araújo, Kinyon and Konieczny also showed that for every positive integer $n\geqslant 2$ such that $n\neq 3$ there exists a semigroup with knit degree equal to $n$. The existence of a semigroup with knit degree $3$ was only demonstrated in 2016 by Bauer and Greenfeld \cite{Graphs_that_arise_as_commuting_graphs_of_semigroups}. They also provided a characterization of the graphs that can arise as commuting graphs of semigroups. Giudici and Kuzma \cite{Graphs_that_arise_as_commuting_graphs_of_semigroups_2} obtained independently the same description. This characterization was fundamental in confirming the existence of semigroups whose commuting graph has clique number/chromatic number equal to $n$, for each positive integer $n$. More recently, Cutolo \cite{Semigroup_whose_commuting_graph_has_diameter_n} proved that for each positive integer $n\geqslant 3$ there exists a group whose commuting graph has diameter $n$. This result improved the one presented by Guidici and Parker in 2013 \cite{Semigroup_whose_commuting_graph_has_diameter_greater_than_n}, which stated that for each positive integer $n$ there is a group whose commuting graph has diameter greater than $n$.

The remainder of this paper is organized in three sections. Section~\ref{Preliminaries} provides some background on graphs and their properties, commuting graphs, Rees matrix semigroups over groups and completely simple semigroups. Section~\ref{Commuting graph of a Rees matrix semigroup over a group} includes results regarding the commuting graph of a Rees matrix semigroup over a group. Here we characterize these graphs, determine their diameter, clique number, girth, chromatic number, and examine the existence of left paths. We also find the maximum size of a commutative subsemigroup of a Rees matrix semigroup over a group, as well as the commutative subsemigroups that achieve that maximum size. Finally, in Section~\ref{Properties of commuting graphs of completely simple semigroups} we find the possible values for the clique number, girth and chromatic number of a commuting graph of a completely simple semigroup, and the possible values for the knit degree of a completely simple semigroup.

\section{Preliminaries} \label{Preliminaries}

\subsection{Graphs}\label{Subsection graphs}

Let $G=(V,E)$ be a simple graph, that is, an undirected graph without loops and multiple edges.

A \defterm{path} in $G$ from a vertex $x$ to a vertex $y$ is a sequence of pairwise distinct vertices (except, possibly, $x$ and $y$) $x=x_1,x_2,\ldots,x_n=y$ such that $\braces{x_1,x_2}, \braces{x_2,x_3},\ldots, \braces{x_{n-1},x_n}$ are pairwise distinct edges of $G$. The \defterm{length} of the path is $n-1$, which corresponds to the number of edges of the path. We say that $G$ is \defterm{connected} if for every vertices $x$ and $y$ there is a path from $x$ to $y$. The \defterm{distance} between the vertices $x$ and $y$, denoted $d(x,y)$, is the length of a shortest path from $x$ to $y$. If there is no such path between the vertices $x$ and $y$, then the distance between $x$ and $y$ is defined to be infinity, that is, $d(x,y)=\infty$. The \defterm{diameter} of $G$, denoted $\diam{G}$, is the maximum distance between vertices of $G$, that is, $\diam{G}=\max\left\{d(x,y):x,y\in V\right\}$. We notice that the diameter of $G$ is finite if and only if $G$ is connected.

Let $K\subseteq V$. We say that $K$ is a \defterm{clique} in $G$ if $\braces{x,y}\in E$ for all $x,y\in K$. The \defterm{clique number} of $G$, denoted $\cliquenumber{G}$, is the size of a largest clique in $G$, that is, $\cliquenumber{G}=\max\left\{|K|: K \text{ is a clique in } G\right\}$.

Assume that the graph $G$ contains cycles. The the \defterm{girth} of $G$, denoted $\girth{G}$, is the length of a shortest cycle in $G$.

The \defterm{chromatic number} of $G$, denoted $\chromaticnumber{G}$, is the minimum number of colours required to colour the vertices of $G$ in a way such that adjacent vertices have different colours.

Throughout this paper we will mention some graphs, namely complete graphs, join of graphs and induced graphs. Below we provide a remainder of the definition of each one of them. 

A \defterm{complete graph} is a graph where all distinct vertices are adjacent to each other. The unique (up to isomorphism) complete graph with $n$ vertices is denoted $K_n$.

Given two simple graphs $G=(V,E)$ and $H=(V',E')$, their \defterm{graph join}, denoted $G\graphjoin H$, is defined to be the new (simple) graph whose set of vertices is $V\cup V'$, and where two vertices $x,y\in V\cup V'$ are adjacent if and only if one of the following conditions is verified:
\begin{enumerate}
    \item $x\in V$ and $y\in V'$ (or vice versa).
    \item $x,y\in V$ and $\braces{x,y}\in E$ (or $x,y\in V'$ and $\braces{x,y}\in E'$).
\end{enumerate}
This means that $G\graphjoin H$ can be obtained from the graphs $G$ and $H$ by making all of the vertices of $G$ adjacent to all of the vertices of $H$.

Given a simple graph $G=(V,E)$ and $V'\subseteq V$, we call \defterm{subgraph induced by $V$} the subgraph of $G$ whose set of vertices is $V'$ and where two vertices are adjacent if and only if they are adjacent in $G$ (that is, the set of edges of the induced subgraph is $\braces{\braces{x,y}\in E: x,y\in V'}$).

\subsection{Commuting graphs and extended commuting graphs}\label{Subsection commuting graphs}

{\sloppy Ever since commuting graphs were introduced there have been variations of their definition. In this section we introduce two of those definitions. The main difference between them is the set of vertices of the graphs --- in one of them the vertices correspond to the elements of the semigroup, and in the other one we exclude the central elements of the semigroup from the set of vertices. \par}

The \defterm{center} of a semigroup $S$ is the set
\begin{displaymath}
    Z(G)= \left\{x\in S: xy=yx \text{ for all } y\in S\right\}.
\end{displaymath}

Let $S$ be a finite non-commutative semigroup. The \defterm{commuting graph} of $S$, denoted $\commgraph{S}$, is the simple graph whose set of vertices is $S\setminus Z(S)$ and where two distinct vertices $x,y\in S\setminus Z(S)$ are adjacent if and only if $xy=yx$.

Let $S$ be a finite semigroup. The \defterm{extended commuting graph} of $S$, denoted $\extendedcommgraph{S}$, is the simple graph whose set of vertices is $S$ and where two distinct vertices $x,y\in S$ are adjacent if and only if $xy=yx$.

Note that in the first definition the semigroup must be non-commutative (because otherwise we would obtain an empty graph), but in the second one we allow the semigroup to be commutative. Additionally, the second definition implies that the center of the semigroup is a clique in the extended commuting graph of the semigroup. 

The next lemma, which is easy to prove, gives a characterization of the extended commuting graph of a semigroup. When the semigroup is not commutative, this characterization shows a relation between the commuting graph and the extended commuting graph of the semigroup.

\begin{lemma}\label{commgraph and extended commgraph}
    Let $S$ be a finite semigroup.
    \begin{enumerate}
        \item If $S$ is commutative, then $\extendedcommgraph{S}$ is isomorphic to $K_{\abs{S}}$.

        \item If $S$ is non-commutative, then $\extendedcommgraph{S}$ is isomorphic to $K_{\abs{Z\parens{S}}}\graphjoin\commgraph{S}$.
    \end{enumerate}
\end{lemma}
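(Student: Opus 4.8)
The plan is to prove both parts directly from the definitions by exhibiting explicit graph isomorphisms, treating the two cases separately.

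For part (1), suppose $S$ is commutative. Then by definition every pair of distinct elements $x,y\in S$ satisfies $xy=yx$, so every pair of distinct vertices of $\extendedcommgraph{S}$ is adjacent. The vertex set of $\extendedcommgraph{S}$ is all of $S$, which has $\abs{S}$ elements, and a simple graph on $\abs{S}$ vertices in which every pair of distinct vertices is adjacent is by definition the complete graph $K_{\abs{S}}$. Hence $\extendedcommgraph{S}\cong K_{\abs{S}}$, establishing the first claim.

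For part (2), suppose $S$ is non-commutative. The first step is to recall that the vertex set of $\extendedcommgraph{S}$ is $S$, which partitions as the disjoint union $Z(S)\sqcup\parens{S\setminus Z(S)}$; the latter set is precisely the vertex set of $\commgraph{S}$. I would then verify that the adjacency relation on $\extendedcommgraph{S}$ matches that of $K_{\abs{Z(S)}}\graphjoin\commgraph{S}$ by checking the three possible types of vertex pairs. First, if $x,y\in Z(S)$ are distinct central elements, then they commute (indeed every central element commutes with everything), so they are adjacent in $\extendedcommgraph{S}$; and in $K_{\abs{Z(S)}}\graphjoin\commgraph{S}$ any two distinct vertices of the $K_{\abs{Z(S)}}$ part are adjacent because that part is complete. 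Second, if $x\in Z(S)$ and $y\in S\setminus Z(S)$, then $xy=yx$ since $x$ is central, so they are adjacent in $\extendedcommgraph{S}$; and in the join any vertex of one part is adjacent to any vertex of the other part, so they are adjacent there too. Third, if $x,y\in S\setminus Z(S)$ are distinct, then they are adjacent in $\extendedcommgraph{S}$ if and only if $xy=yx$, which is exactly the adjacency condition defining $\commgraph{S}$; and in the join two vertices both lying in the $\commgraph{S}$ part are adjacent if and only if they are adjacent in $\commgraph{S}$.

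Since the identity map on $S$ sends the vertex set of $\extendedcommgraph{S}$ bijectively onto the vertex set $Z(S)\sqcup\parens{S\setminus Z(S)}$ of $K_{\abs{Z(S)}}\graphjoin\commgraph{S}$ and, by the three cases above, preserves and reflects adjacency, it is a graph isomorphism, completing part (2). This argument is entirely routine; there is no real obstacle, since the statement is essentially an unwinding of the definition of the graph join together with the observation that central elements commute with everything. The only point requiring a small amount of care is ensuring that the three cases of vertex pairs are exhaustive and that the empty-intersection of the two parts is handled correctly, but these are immediate from the partition $S=Z(S)\sqcup\parens{S\setminus Z(S)}$.
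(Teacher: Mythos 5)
Your proof is correct, and it is precisely the routine verification the paper has in mind: the paper states this lemma without proof, remarking only that it "is easy to prove." Your case analysis (central--central, central--noncentral, noncentral--noncentral pairs under the identity map on $S$) correctly unwinds the definition of the graph join and fills in the omitted argument with no gaps.
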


The following definition corresponds to a concept created specifically for commuting graphs of semigroups.

Let $S$ be a non-commutative semigroup. A \defterm{left path} in $\commgraph{S}$ is a path $x_1,\ldots,x_n$ in $\commgraph{S}$ such that $x_1\neq x_n$ and $x_1x_i=x_nx_i$ for all $i\in\braces{1,\ldots,n}$. If $\commgraph{S}$ contains left paths then the \defterm{knit degree} of $S$, denoted $\knitdegree{S}$, is the length of a shortest left path in $\commgraph{S}$.
 
\subsection{Completely simple semigroups}\label{Subsection completely simple semigroups}
A semigroup without a zero is \defterm{completely simple} if it satisfies the following conditions:
\begin{enumerate}
    \item It contains no proper ideals.
    \item It contains a minimal idempotent.
\end{enumerate}

Completely simple semigroups can also be characterized via a semigroup construction called the Rees matrix semigroup over a group (Theorem~\ref{completely simple semigroup <=> Rees matrix construction}), which is described below.

Let $G$ be a group, let $I$ and $\Lambda$ be index sets, and let $P$ be a $\Lambda \times I$ matrix whose entries are elements of $G$. For each $i\in I$ and $\lambda \in \Lambda$, we denote by $p_{\lambda i}$ the $\parens{\lambda, i}$-th entry of the matrix $P$. A \defterm{Rees matrix semigroup over a group}, denoted $\Rees{G}{I}{\Lambda}{P}$, is the set $I\times G \times \Lambda$ with multiplication defined as follows
\begin{displaymath} \parens{i,x,\lambda}\parens{j,y,\mu}=\parens{i,xp_{\lambda j}y,\mu}.
\end{displaymath}

\begin{theorem}[{\cite[Theorem 4.11]{Nine_chapters_Cain}}]\label{completely simple semigroup <=> Rees matrix construction}
    A semigroup $S$ is completely simple if and only if there exist a group $G$, index sets $I$ and $\Lambda$, and a $\Lambda \times I$ matrix $P$ with entries from $G$ such that $S\simeq \Rees{G}{I}{\Lambda}{P}$.
\end{theorem}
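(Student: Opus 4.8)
The plan is to prove the two implications separately; the reverse implication is a direct computation, while the forward implication is the substantive ``coordinatization'' argument and is where essentially all of the work lies.

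For the direction $(\Leftarrow)$, I would start from $S = \Rees{G}{I}{\Lambda}{P}$ and verify the defining conditions by hand. First I would check that the given multiplication is associative, which is a routine manipulation using associativity in $G$. Next I would locate the idempotents: since $\parens{i,x,\lambda}^2 = \parens{i, x p_{\lambda i} x, \lambda}$, an element is idempotent precisely when $x = p_{\lambda i}^{-1}$, so each pair $\parens{i,\lambda}$ yields exactly one idempotent. To see that $S$ has no proper ideals I would show $S\,\parens{i,x,\lambda}\,S = S$ for every element: given any target $\parens{j,y,\mu}$, the product $\parens{j,a,\nu}\parens{i,x,\lambda}\parens{k,b,\mu}$ has middle coordinate $a\,p_{\nu i}\,x\,p_{\lambda k}\,b$, and because $G$ is a group one can solve for $a$ (or $b$) to hit any prescribed $y$. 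Finally, to confirm the existence of a minimal idempotent I would compute $ef$ and $fe$ for two idempotents $e = \parens{i, p_{\lambda i}^{-1}, \lambda}$ and $f = \parens{j, p_{\mu j}^{-1}, \mu}$ and check that $ef = fe = f$ forces $i = j$ and $\lambda = \mu$; hence every idempotent is minimal under the natural partial order, and in particular a minimal one exists.

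For the direction $(\Rightarrow)$, I would invoke the theory of Green's relations. Because $S$ is simple it consists of a single $\mathcal{D}$-class, and from the presence of a minimal (primitive) idempotent together with simplicity one deduces that every $\mathcal{H}$-class is a group and that $S$ has the ``egg-box'' structure: the $\mathcal{R}$-classes and $\mathcal{L}$-classes give two partitions of $S$, and each $\mathcal{H}$-class is an intersection $R_i \cap L_\lambda$. I would fix one idempotent $e$, set $G = H_e$ (the maximal subgroup at $e$), index the $\mathcal{R}$-classes by $I$ and the $\mathcal{L}$-classes by $\Lambda$, and choose, for each $i$ and each $\lambda$, fixed elements connecting $H_e$ to $R_i$ and to $L_\lambda$ respectively. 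Using Green's Lemma, which guarantees that the relevant left and right translations between $\mathcal{H}$-classes of a single $\mathcal{D}$-class are bijections, I would write each element of $S$ uniquely in coordinates $\parens{i, g, \lambda}$ with $g \in G$, define $P$ by letting $p_{\lambda i}$ be the group element arising from the product of the two chosen connecting elements, and then show that the resulting coordinate map $S \to \Rees{G}{I}{\Lambda}{P}$ is a bijective homomorphism.

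The main obstacle is the forward direction, and within it the honest verification that the coordinate map is a homomorphism, i.e.\ that transporting the multiplication of $S$ into coordinates really produces the sandwich-matrix rule $\parens{i,x,\lambda}\parens{j,y,\mu} = \parens{i, x p_{\lambda j} y, \mu}$. This rests on two points I would isolate as the crux: first, that the chosen connecting elements behave well under multiplication, so that the coordinates are well defined and unique; and second, that the sandwich entries $p_{\lambda i}$ genuinely lie in $G$ and encode the ``defect'' incurred when multiplying across distinct $\mathcal{H}$-classes. Establishing bijectivity of the coordinate map is comparatively painless once the egg-box picture and the group structure of the $\mathcal{H}$-classes are in hand; the delicate bookkeeping is entirely in matching the products.
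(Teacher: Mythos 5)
This statement is not proved in the paper at all: it is the classical Rees--Suschkewitsch theorem, imported with a citation to \cite[Theorem 4.11]{Nine_chapters_Cain}, so there is no in-paper proof to compare against, and the right benchmark is the standard textbook argument. Your outline is exactly that argument and is correct --- the direct verification for $\Rees{G}{I}{\Lambda}{P}$ (associativity, the idempotents $\parens{i,p_{\lambda i}^{-1},\lambda}$, simplicity via solving $a\,p_{\nu i}\,x\,p_{\lambda k}\,b=y$ in the group, and primitivity of every idempotent), and for the converse the Green's-relations coordinatization with $G=H_e$, connecting elements $r_i$, $s_\lambda$, sandwich entries $p_{\lambda i}=s_\lambda r_i\in H_e$, and Green's Lemma giving uniqueness of the coordinates and the homomorphism property.
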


This theorem is fundamental and we will use it without explicit reference in the rest of the paper.

\section{Commuting graph of a Rees matrix semigroup over a group} \label{Commuting graph of a Rees matrix semigroup over a group}

Let $G$ be a finite group, let $I$ and $\Lambda$ be finite index sets and let $P$ be a $\Lambda \times I$ matrix whose entries are elements of $G$. Since $G$, $I$ and $\Lambda$ are finite, then so is $\Rees{G}{I}{\Lambda}{P}$. 

In this section we characterize the commuting graph of $\Rees{G}{I}{\Lambda}{P}$ and study its properties. More specifically, we determine its diameter, clique number, girth and chromatic number, as well as the knit degree of $\Rees{G}{I}{\Lambda}{P}$. Studying the commuting graph of $\Rees{G}{I}{\Lambda}{P}$ allows us to find the maximum size of a commutative subsemigroup of $\Rees{G}{I}{\Lambda}{P}$, as well as the commutative subemigroups of $\Rees{G}{I}{\Lambda}{P}$ of that size.

When the index sets $I$ and $\Lambda$ are singletons, we have $\Rees{G}{I}{\Lambda}{P}\simeq G$ (see, for example, \cite[Exercise 4.1]{Nine_chapters_Cain}). Consequently, when $G$ is not abelian, the commuting graphs $\commgraph{\Rees{G}{I}{\Lambda}{P}}$ and $\commgraph{G}$ are isomorphic. Thus the questions of connectivity, diameter, clique number, girth, chromatic number and knit degree are equivalent to the corresponding questions for groups.

In the remainder of this section, we will see what the properties of $\commgraph{\Rees{G}{I}{\Lambda}{P}}$ look like when at least one of $I$ and $\Lambda$ is not a singleton. We begin this case by verifying that $\Rees{G}{I}{\Lambda}{P}$ is not commutative, which means we do not need to add any restrictions in order to ensure that its commuting graph is non-empty. In fact, because the center is empty, all the elements of $I \times G \times \Lambda$ are vertices of $\commgraph{\Rees{G}{I}{\Lambda}{P}}$.

\begin{proposition}\label{center M[G;I,^,P] non-commutative}
    Suppose that $\abs{I}>1$ or $\abs{\Lambda}>1$. Then $Z\parens{\Rees{G}{I}{\Lambda}{P}}\allowbreak=\emptyset$ and $\Rees{G}{I}{\Lambda}{I}$ is non-commutative.
\end{proposition}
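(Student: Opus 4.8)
The plan is to read off the center directly from the defining multiplication rule by comparing coordinates. Fix an arbitrary element $\parens{i,x,\lambda}$ of $\Rees{G}{I}{\Lambda}{P}$ and suppose, for contradiction, that it is central. Then it must commute with every $\parens{j,y,\mu}$, so I would compute both products using $\parens{i,x,\lambda}\parens{j,y,\mu}=\parens{i,xp_{\lambda j}y,\mu}$, obtaining
\[
\parens{i,x,\lambda}\parens{j,y,\mu}=\parens{i,xp_{\lambda j}y,\mu},
\qquad
\parens{j,y,\mu}\parens{i,x,\lambda}=\parens{j,yp_{\mu i}x,\lambda}.
\]

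The next step is simply to compare the first and third coordinates of these two triples. Equality of the two products forces $i=j$ and $\mu=\lambda$, and since $\parens{i,x,\lambda}$ is assumed to commute with \emph{every} partner, these equalities must hold for all $j\in I$ and all $\mu\in\Lambda$. But if $\abs{I}>1$ there is some $j\neq i$, and if $\abs{\Lambda}>1$ there is some $\mu\neq\lambda$; in either case the coordinate equality fails for a suitable partner. Hence no element can be central, so $Z\parens{\Rees{G}{I}{\Lambda}{P}}=\emptyset$.

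Non-commutativity then follows at once: a commutative semigroup coincides with its own center, yet $\Rees{G}{I}{\Lambda}{P}$ is nonempty (as $G$, $I$ and $\Lambda$ are nonempty) while having empty center, so it cannot be commutative. I do not expect any genuine obstacle here, since the entire argument reduces to a coordinate comparison; the only point requiring a little care is to insist that the comparison be carried out for a truly arbitrary partner $\parens{j,y,\mu}$, so that a single index differing from $i$ in $I$ (or from $\lambda$ in $\Lambda$) already suffices to exclude $\parens{i,x,\lambda}$ from the center.
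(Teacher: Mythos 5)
Your proof is correct and follows essentially the same route as the paper: both arguments compare the first and third coordinates of the two products to show that an element $\parens{i,x,\lambda}$ fails to commute with some partner whose $I$-index or $\Lambda$-index differs, forcing $Z\parens{\Rees{G}{I}{\Lambda}{P}}=\emptyset$, with non-commutativity an immediate consequence. The only cosmetic difference is that the paper fixes a concrete witness $\parens{j,x,\lambda}$ with $j\neq i$ and treats the case $\abs{\Lambda}>1$ by symmetry, whereas you argue with a fully general partner $\parens{j,y,\mu}$ --- the substance is identical.
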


\begin{proof}
    Let $i\in I$, $\lambda \in \Lambda$ and $x\in G$.
    
    Assume that $\abs{I}>1$. (The case where $\abs{\Lambda} >1$ is symmetrical.) Then there exists $j\in I$ such that $i\neq j$. Consequently, we have
    \begin{displaymath}
     \parens{i,x,\lambda}\parens{j,x,\lambda}=\parens{i,xp_{\lambda j}x,\lambda}\neq\parens{j,xp_{\lambda i}x,\lambda}=\parens{j,x,\lambda}\parens{i,x,\lambda}.   
    \end{displaymath}
    Thus $\parens{i,x,\lambda}\notin Z\parens{\Rees{G}{I}{\Lambda}{P}}$.

    Therefore no $\parens{i,x,\lambda}\in \Rees{G}{I}{\Lambda}{P}$ lies in the center of $\Rees{G}{I}{\Lambda}{P}$. In particular, $\Rees{G}{I}{\Lambda}{P}$ is not commutative.
\end{proof}

The following two lemmata describe how commutativity works in $\Rees{G}{I}{\Lambda}{P}$: more specifically, they provide necessary and sufficient conditions for two elements of $\Rees{G}{I}{\Lambda}{P}$ to commute. Lemma~\ref{commutativity} and Lemma~\ref{xy=yx <=> (,p^-1x,) (,p^-1y,) commute} will be used frequently in the remainder of this section and play an important role in the determination of the properties of the commuting graph of $\Rees{G}{I}{\Lambda}{P}$.

\begin{lemma}\label{commutativity}
    Let $i,j\in I$ and $\lambda,\mu\in\Lambda$ and $x,y\in G$. Then $\parens{i,x,\lambda}\parens{j,y,\mu}=\parens{j,y,\mu}\parens{i,x,\lambda}$ if and only if $i=j$, $\lambda=\mu$ and $xp_{\lambda i}y=yp_{\lambda i}x$.
\end{lemma}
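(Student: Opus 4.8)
The plan is to compute both products directly from the multiplication rule of $\Rees{G}{I}{\Lambda}{P}$ and then compare the resulting triples coordinate by coordinate. Using the definition $\parens{i,x,\lambda}\parens{j,y,\mu}=\parens{i,xp_{\lambda j}y,\mu}$, I would first record
\[
\parens{i,x,\lambda}\parens{j,y,\mu}=\parens{i,xp_{\lambda j}y,\mu}
\quad\text{and}\quad
\parens{j,y,\mu}\parens{i,x,\lambda}=\parens{j,yp_{\mu i}x,\lambda}.
\]
Since elements of $\Rees{G}{I}{\Lambda}{P}$ are triples, and two triples coincide precisely when all three of their coordinates agree, the equation $\parens{i,x,\lambda}\parens{j,y,\mu}=\parens{j,y,\mu}\parens{i,x,\lambda}$ is equivalent to the three simultaneous conditions $i=j$ (first coordinate), $\mu=\lambda$ (third coordinate) and $xp_{\lambda j}y=yp_{\mu i}x$ (middle coordinate, an equation in $G$).

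For the forward implication I would assume the two products coincide and read off $i=j$ and $\lambda=\mu$ from the outer coordinates. Substituting $j=i$ and $\mu=\lambda$ into the middle-coordinate equation $xp_{\lambda j}y=yp_{\mu i}x$ then yields exactly $xp_{\lambda i}y=yp_{\lambda i}x$, which is the third asserted condition. For the converse I would assume $i=j$, $\lambda=\mu$ and $xp_{\lambda i}y=yp_{\lambda i}x$, and plug these into the two computed triples; both products then reduce to $\parens{i,xp_{\lambda i}y,\lambda}$, so the elements commute.

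There is essentially no obstacle here, as the argument is a direct unwinding of the multiplication rule. The only point requiring care is the bookkeeping of indices: one must substitute $j=i$ and $\mu=\lambda$ \emph{before} claiming that the group equation takes the symmetric form $xp_{\lambda i}y=yp_{\lambda i}x$, since the raw middle-coordinate equality involves the a priori distinct entries $p_{\lambda j}$ and $p_{\mu i}$. Keeping the two implications cleanly separated ensures the biconditional is established without circularity.
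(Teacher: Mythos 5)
Your proposal is correct and matches the paper's proof, which establishes the same chain of equivalences: compute both products via the multiplication rule, compare triples coordinatewise, and substitute $j=i$, $\mu=\lambda$ to turn $xp_{\lambda j}y=yp_{\mu i}x$ into $xp_{\lambda i}y=yp_{\lambda i}x$. The only cosmetic difference is that the paper writes a single string of biconditionals where you separate the two implications; the content is identical.
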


\begin{proof}
    We have
    \begin{align*}
        & \parens{i,x,\lambda}\parens{j,y,\mu}=\parens{j,y,\mu}\parens{i,x,\lambda}\\
        \iff{} & \parens{i,xp_{\lambda j}y,\mu}=\parens{j,yp_{\mu i}x,\lambda}\\
        \iff{} & i=j \land xp_{\lambda j}y=yp_{\mu i}x \land \lambda=\mu\\
        \iff{} & i=j \land xp_{\lambda i}y=yp_{\lambda i}x \land \lambda=\mu. \qedhere
    \end{align*}
\end{proof}

\begin{lemma}\label{xy=yx <=> (,p^-1x,) (,p^-1y,) commute}
    Let $i\in I$ and $\lambda \in \Lambda$ and $x,y\in G$. Then $xy=yx$ if and only if $\parens{i,p_{\lambda i}^{-1}x,\lambda}\parens{i,p_{\lambda i}^{-1}y,\lambda}=\parens{i,p_{\lambda i}^{-1}y,\lambda}\parens{i,p_{\lambda i}^{-1}x,\lambda}$.
\end{lemma}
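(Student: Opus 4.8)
The plan is to reduce the statement entirely to the previous Lemma~\ref{commutativity}. The two elements in question, $\parens{i,p_{\lambda i}^{-1}x,\lambda}$ and $\parens{i,p_{\lambda i}^{-1}y,\lambda}$, share the same first coordinate $i$ and the same third coordinate $\lambda$. So I would apply Lemma~\ref{commutativity} with $j=i$, $\mu=\lambda$, and the group components $p_{\lambda i}^{-1}x$ and $p_{\lambda i}^{-1}y$ playing the roles of $x$ and $y$. With these choices the two positional conditions ``$i=j$'' and ``$\lambda=\mu$'' are automatically satisfied, so the commutativity of the two elements is equivalent to the single group equation
\begin{displaymath}
\parens{p_{\lambda i}^{-1}x}\, p_{\lambda i}\, \parens{p_{\lambda i}^{-1}y} = \parens{p_{\lambda i}^{-1}y}\, p_{\lambda i}\, \parens{p_{\lambda i}^{-1}x}.
\end{displaymath}

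Next I would simplify both sides in the group $G$. The factor $p_{\lambda i}$ sitting between the two group components cancels against the inverse $p_{\lambda i}^{-1}$ immediately to its right, since $p_{\lambda i}\,p_{\lambda i}^{-1}$ is the identity. Thus the left-hand side collapses to $p_{\lambda i}^{-1}xy$ and the right-hand side to $p_{\lambda i}^{-1}yx$. Cancelling the common left factor $p_{\lambda i}^{-1}$ (equivalently, multiplying both sides on the left by $p_{\lambda i}$) leaves exactly $xy=yx$. Since every step is an equivalence, reading the chain in both directions establishes the ``if and only if'' simultaneously, and no separate argument for each implication is needed.

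I do not anticipate any genuine obstacle: once Lemma~\ref{commutativity} has been invoked, what remains is a one-line cancellation in $G$. The only point deserving mild care is the bookkeeping of the substitution, namely confirming that the entry $p_{\lambda i}$ appearing between the two group components in Lemma~\ref{commutativity} is the very same $p_{\lambda i}$ whose inverse was prepended to $x$ and $y$; this is precisely what makes the prefactor $p_{\lambda i}^{-1}$ cancel cleanly and is the reason the lemma is stated with this particular choice of components.
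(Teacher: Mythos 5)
Your proposal is correct and follows essentially the same route as the paper: the paper's proof is the identical chain of equivalences, read in the other direction (from $xy=yx$, multiply on the left by $p_{\lambda i}^{-1}$, insert $p_{\lambda i}p_{\lambda i}^{-1}$, then invoke Lemma~\ref{commutativity} with equal index pairs). The cancellation bookkeeping you flag is exactly the step the paper performs, so there is nothing to add.
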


\begin{proof}
    We have
    \begin{align*}
        &xy=yx\\
        \iff{} & p_{\lambda i}^{-1}xy=p_{\lambda i}^{-1}yx\\
        \iff{} & \parens{p_{\lambda i}^{-1}x}p_{\lambda i}\parens{p_{\lambda i}^{-1}y}=\parens{p_{\lambda i}^{-1}y}p_{\lambda i}\parens{p_{\lambda i}^{-1}x} & \bracks{\text{by Lemma~\ref{commutativity}}}\\
        \iff{} & \parens{i,p_{\lambda i}^{-1}x,\lambda}\parens{i,p_{\lambda i}^{-1}y,\lambda}=\parens{i,p_{\lambda i}^{-1}y,\lambda}\parens{i,p_{\lambda i}^{-1}x,\lambda}.& \qedhere
    \end{align*}
\end{proof}

In Theorem~\ref{connectedness and diameter} we show that (when at least one of the sets $I$ and $\Lambda$ is not a singleton) $\commgraph{\Rees{G}{I}{\Lambda}{P}}$ is never connected. In addition, we identify and describe the connected components of $\commgraph{\Rees{G}{I}{\Lambda}{P}}$.

Furthermore, we observe that, it follows from Theorem~\ref{connectedness and diameter} that the matrix used to construct a Rees matrix semigroup over a group does not influence its commuting graph. That is, if $Q$ is also a $\Lambda\times I$ matrix whose entries are elements of $G$, then the graphs $\commgraph{\Rees{G}{I}{\Lambda}{P}}$ and $\commgraph{\Rees{G}{I}{\Lambda}{Q}}$ are isomorphic.

\begin{theorem}\label{connectedness and diameter}
    Suppose that $\abs{I}>1$ or $\abs{\Lambda}>1$. Then
    \begin{enumerate}
        \item $\commgraph{\Rees{G}{I}{\Lambda}{P}}$ is not connected and its connected components are the subgraphs induced by the $\mathcal{H}$-classes of $\Rees{G}{I}{\Lambda}{P}$.
        \item All the connected components of $\commgraph{\Rees{G}{I}{\Lambda}{P}}$ are isomorphic to $\extendedcommgraph{G}$ and their diameter is equal to
        \begin{displaymath}\begin{cases}
        0& \text{if } G \text{ is trivial,}\\
        1& \text{if } G \text{ is abelian and not trivial,}\\
        2& \text{if } G \text{ is not abelian}.
    \end{cases}
    \end{displaymath}
    \end{enumerate}
\end{theorem}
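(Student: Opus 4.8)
The plan is to organise the argument around the $\mathcal{H}$-classes of $\Rees{G}{I}{\Lambda}{P}$, which are precisely the sets $H_{i\lambda}=\{i\}\times G\times\{\lambda\}$ for $i\in I$ and $\lambda\in\Lambda$. First I would apply Lemma~\ref{commutativity}: two elements $\parens{i,x,\lambda}$ and $\parens{j,y,\mu}$ commute only if $i=j$ and $\lambda=\mu$. Hence no edge of $\commgraph{\Rees{G}{I}{\Lambda}{P}}$ joins vertices of distinct $\mathcal{H}$-classes, so every connected component is contained in a single $\mathcal{H}$-class. Since $\abs{I}>1$ or $\abs{\Lambda}>1$ forces $\abs{I}\,\abs{\Lambda}>1$, there are at least two $\mathcal{H}$-classes, and this will yield disconnectedness once each class is shown to be connected.

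Next I would show that the subgraph induced by each $H_{i\lambda}$ is isomorphic to $\extendedcommgraph{G}$. Proposition~\ref{center M[G;I,^,P] non-commutative} guarantees that the center is empty, so every element of $H_{i\lambda}$ is a vertex; thus the vertex set of the induced subgraph is all of $H_{i\lambda}$, matching the vertex set $G$ of $\extendedcommgraph{G}$. I would take the bijection $\phi_{i\lambda}\colon G\to H_{i\lambda}$ given by $\phi_{i\lambda}(x)=\parens{i,p_{\lambda i}^{-1}x,\lambda}$, which is a bijection because left multiplication by $p_{\lambda i}^{-1}$ permutes $G$. By Lemma~\ref{xy=yx <=> (,p^-1x,) (,p^-1y,) commute} one has $xy=yx$ if and only if $\phi_{i\lambda}(x)$ and $\phi_{i\lambda}(y)$ commute, and $\phi_{i\lambda}(x)=\phi_{i\lambda}(y)$ exactly when $x=y$; therefore $\phi_{i\lambda}$ carries edges to edges and non-edges to non-edges, so it is a graph isomorphism onto the induced subgraph.

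With this isomorphism, part (1) follows. By Lemma~\ref{commgraph and extended commgraph}, $\extendedcommgraph{G}$ is connected in every case: it is $K_{\abs{G}}$ when $G$ is commutative, and $K_{\abs{Z(G)}}\graphjoin\commgraph{G}$ otherwise, and a join whose first factor $K_{\abs{Z(G)}}$ is nonempty (the identity is always central) is connected. Hence each $H_{i\lambda}$ induces a connected subgraph, and together with the absence of cross-class edges and the existence of at least two classes, the connected components are exactly the $\mathcal{H}$-classes. For part (2) it then remains to compute $\diam{\extendedcommgraph{G}}$ in the three cases: if $G$ is trivial it is a single vertex of diameter $0$; if $G$ is abelian and nontrivial it is $K_{\abs{G}}$ with $\abs{G}\geqslant 2$, of diameter $1$.

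The delicate case, which I expect to require the most care, is the diameter when $G$ is non-abelian. Here I would use the structure $K_{\abs{Z(G)}}\graphjoin\commgraph{G}$: the identity $e$ lies in $Z(G)$ and is therefore adjacent to every other vertex, so the diameter is at most $2$. For the matching lower bound I would produce two vertices at distance exactly $2$. Since $G$ is non-abelian there is some $a\in G\setminus Z(G)$, and by definition there is $b\in G$ with $ab\neq ba$; this $b$ cannot be central, so $a$ and $b$ are non-commuting vertices of $\commgraph{G}$, hence non-adjacent in the join, while the path $a,e,b$ through the central identity has length $2$. This forces the diameter to equal $2$ and completes the computation.
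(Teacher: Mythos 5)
Your proposal is correct and takes essentially the same approach as the paper: the same decomposition into $\mathcal{H}$-classes via Lemma~\ref{commutativity}, the same bijection $x\mapsto\parens{i,p_{\lambda i}^{-1}x,\lambda}$ combined with Lemma~\ref{xy=yx <=> (,p^-1x,) (,p^-1y,) commute} to get the isomorphism with $\extendedcommgraph{G}$, and the same diameter computation using the identity as a universal vertex. The only cosmetic difference is that you establish the isomorphism first and transfer connectivity from $\extendedcommgraph{G}$ (via its join structure), whereas the paper first proves connectivity directly by exhibiting star paths through $\parens{i,p_{\lambda i}^{-1},\lambda}$ inside $\commgraph{\Rees{G}{I}{\Lambda}{P}}$ --- the same underlying idea.
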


\begin{proof}
    The $\mathcal{H}$-classes of $\Rees{G}{I}{\Lambda}{P}$ are the sets $\braces{i} \times G \times \braces{\lambda}$, where $i\in I$ and $\lambda \in \Lambda$. For each $i \in I$ and $\lambda\in \Lambda$ we define $C_{i\lambda}$ as the subgraph of $\commgraph{\Rees{G}{I}{\Lambda}{P}}$ induced by the $\mathcal{H}$-class $\set{i}\times G \times \set{\lambda}$. We are going to see that those subgraphs are the connected components of $\commgraph{\Rees{G}{I}{\Lambda}{P}}$.

    Let $i\in I$ and $\lambda \in \Lambda$. First, we are going to prove that $C_{i\lambda}$ is a subgraph of a connected component of $\commgraph{\Rees{G}{I}{\Lambda}{P}}$. Let $x\in G\setminus\set{1_G}$. Since $1_G$ and $p_{\lambda i}x$ commute then, by Lemma~\ref{xy=yx <=> (,p^-1x,) (,p^-1y,) commute}, $\parens{i,p_{\lambda i}^{-1}\parens{p_{\lambda i}x},\lambda}$ and $\parens{i,p_{\lambda i}^{-1}1_G,\lambda}$ also commute. Hence $\parens{i,x,\lambda}=\parens{i,p_{\lambda i}^{-1}\parens{p_{\lambda i}x},\lambda},\parens{i,p_{\lambda i}^{-1}1_G,\lambda}=\parens{i,p_{\lambda i}^{-1},\lambda}$ is a path from $\parens{i,x,\lambda}$ to $\parens{i,p_{\lambda i}^{-1},\lambda}$. Thus all the vertices belonging to $\set{i}\times G\times\set{\lambda}$ are in the same connected component of $\commgraph{\Rees{G}{I}{\Lambda}{P}}$.
    
    Now we are going to prove that the connected component that contains the vertices belonging to $\set{i}\times G \times \set{\lambda}$ is the subgraph $C_{i\lambda}$. In order to do this, we are going to see that any path that begins with the vertex $\parens{i, p_{\lambda i}^{-1}, \lambda}$ ends in a vertex that belongs to $\set{i}\times G \times \set{\lambda}$. Let 
    \begin{displaymath}
        \parens{i, p_{\lambda i}^{-1}, \lambda}=\parens{i_1, x_1, \lambda_1},\parens{i_2, x_2, \lambda_2},\cdots,\parens{i_n, x_n, \lambda_n}
    \end{displaymath}
    be a path in $\commgraph{\Rees{G}{I}{\Lambda}{P}}$ whose initial vertex is $\parens{i, p_{\lambda i}^{-1}, \lambda}$. We have $\parens{i_j, x_j, \lambda_j}\parens{i_{j+1}, x_{j+1}, \lambda_{j+1}}=\parens{i_{j+1}, x_{j+1}, \lambda_{j+1}}\parens{i_j, x_j, \lambda_j}$ for all $j\in\set{1,\ldots,\allowbreak n-1}$. Therefore, by Lemma~\ref{commutativity}, $i_j=i_{j+1}$ and $\lambda_j=\lambda_{j+1}$ for all $j\in\set{1,\ldots,n-1}$, which implies that $i=i_1=i_2=\cdots=i_n$ and $\lambda=\lambda_1=\lambda_2=\cdots=\lambda_n$. Thus $\parens{i_n, x_n, \lambda_n}\in\set{i}\times G \times \set{\lambda}$.

    We just proved that the connected components of $\commgraph{\Rees{G}{I}{\Lambda}{P}}$ are the subgraphs $C_{i\lambda}$, where $i\in I$ and $\lambda \in \Lambda$. Note that since $\abs{I}>1$ or $\abs{\Lambda}>1$, then $\commgraph{\Rees{G}{I}{\Lambda}{P}}$ contains more than one connected component, which implies that $\commgraph{\Rees{G}{I}{\Lambda}{P}}$ is not connected. This concludes the proof of $(1)$.

     Let $i\in I$ and $\lambda \in \Lambda$. Our aim is to prove that $C_{i\lambda}$ and $\extendedcommgraph{G}$ are isomorphic. Let $\varphi: G \to \braces{i}\times G \times \braces{\lambda}$ be the map defined by $x\varphi=\parens{i,p_{\lambda i}^{-1}x, \lambda}$ for all $x\in G$. It is straightforward to see that $\varphi$ is a group isomorphism. Furthermore, $G$ and $\set{i}\times G\times\set{\lambda}$ are the sets of vertices of $\extendedcommgraph{G}$ and $C_{i\lambda}$, respectively. Thus, by Lemma~\ref{xy=yx <=> (,p^-1x,) (,p^-1y,) commute}, the graphs $\extendedcommgraph{G}$ and $C_{i\lambda}$ are isomorphic.

    We are going to determine the diameter of $C_{i\lambda}$. We begin by noticing that $\diam{C_{i\lambda}}=\diam{\extendedcommgraph{G}}$. If $G=\braces{1_G}$ then $\extendedcommgraph{G}$ has only one vertex and, therefore, $\diam{\extendedcommgraph{G}}=0$. If $G$ is abelian and not trivial, then $\extendedcommgraph{G}$ has more than one vertex and all its vertices are adjacent to each other, which implies that $\diam{\extendedcommgraph{G}}=1$. Now suppose that $G$ is not abelian. Since $1_G$ commutes with all the elements of $G$, then $1_G$ is adjacent to all the other vertices of $\extendedcommgraph{G}$. Thus $\diam{\extendedcommgraph{G}}\leqslant 2$. Moreover, there exist $x,y\in G$ such that $xy\neq yx$ (because $G$ is not abelian) and, consequenly, the vertices $x$ and $y$ are not adjacent in $\extendedcommgraph{G}$. Thus $\diam{\extendedcommgraph{G}}\geqslant 2$, which concludes the proof of $(2)$.
\end{proof}

We note that, as a result of part (1) of Theorem~\ref{connectedness and diameter} and the fact that the $\mathcal{H}$-classes of $\Rees{G}{I}{\Lambda}{P}$ are the sets $\braces{i}\times G\times \braces{\lambda}$, where $i\in I$ and $\lambda \in \Lambda$, it follows that the number of connected components of $\commgraph{\Rees{G}{I}{\Lambda}{P}}$ is $\abs{I}\cdot\abs{\Lambda}$ (when $\abs{I}>1$ or $\abs{\Lambda}>1$).

Moreover, as a consequence of Lemma~\ref{commgraph and extended commgraph} and Theorem~\ref{connectedness and diameter}, we know that if $G$ is abelian then all the connected components of $\commgraph{\Rees{G}{I}{\Lambda}{P}}$ are isomorphic to $K_{\abs{G}}$, and if $G$ is not abelian then all the connected components of $\commgraph{\Rees{G}{I}{\Lambda}{P}}$ are isomorphic to $K_{\abs{Z\parens{G}}}\graphjoin\commgraph{G}$.

 In the next result, we use the information we have on the structure of $\commgraph{\Rees{G}{I}{\Lambda}{P}}$ (given by Theorem~\ref{connectedness and diameter}) to establish a relation between the maximum size of a commutative subsemigroup of $\Rees{G}{I}{\Lambda}{P}$ and the maximum size of an abelian subgroup of $G$. Moreover, we identify the maximum-order commutative subsemigroups of $\Rees{G}{I}{\Lambda}{P}$. (We note that in Corollary~\ref{maximum size commutative semigroup M[G;I,^,P]} there are no restriction on the sizes of $I$ and $\Lambda$.)

\begin{corollary}\label{maximum size commutative semigroup M[G;I,^,P]}
    The maximum size of a commutative subsemigroup of $\Rees{G}{I}{\Lambda}{P}$ is equal to the maximum size of an abelian subgroup of $G$. Furthermore, the commutative subsemigroups of $\Rees{G}{I}{\Lambda}{P}$ of maximum size are the groups $\braces{i}\times \parens{p_{\lambda i}^{-1}H}\times\braces{\lambda}$, where $i\in I$, $\lambda\in\Lambda$ and $H$ is an abelian subgroup of $G$ of maximum size.
\end{corollary}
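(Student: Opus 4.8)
The plan is to reduce the whole question to a single $\mathcal{H}$-class of $\Rees{G}{I}{\Lambda}{P}$, inside which commutativity is governed entirely by $G$. First I would take an arbitrary commutative subsemigroup $T$ of $\Rees{G}{I}{\Lambda}{P}$ and show that it is contained in one of the $\mathcal{H}$-classes $\set{i}\times G\times\set{\lambda}$. This is exactly where Lemma~\ref{commutativity} does the work: if $\abs{T}\geqslant 2$, then any two distinct elements $\parens{i,x,\lambda},\parens{j,y,\mu}\in T$ commute, so they must agree in their first and third coordinates; since every pair of elements of $T$ commutes, all of them share a common $i\in I$ and $\lambda\in\Lambda$, whence $T\subseteq\set{i}\times G\times\set{\lambda}$ (a singleton lying trivially in such a class). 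I would stress that this step uses no hypothesis on $\abs{I}$ or $\abs{\Lambda}$, which is precisely why the statement holds with no restriction on the index sets.

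Next I would invoke the group isomorphism $\varphi\colon G\to\set{i}\times G\times\set{\lambda}$ given by $x\varphi=\parens{i,p_{\lambda i}^{-1}x,\lambda}$, already established in the proof of Theorem~\ref{connectedness and diameter}. Here one first checks the routine fact that $\set{i}\times G\times\set{\lambda}$ is closed under the semigroup multiplication and that this restricted operation is the group structure transported by $\varphi$, so that $T$ is genuinely a subsemigroup of a finite group isomorphic to $G$. Since a finite subsemigroup of a group is a subgroup, $T$ is a subgroup, and as $T$ is commutative, the subgroup $H$ of $G$ determined by $H\varphi=T$ is abelian with $\abs{H}=\abs{T}$. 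This immediately yields the upper bound: $\abs{T}$ is at most the maximum size $m$ of an abelian subgroup of $G$.

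For the reverse direction and the attainment of the bound, I would observe conversely that for any $i\in I$, $\lambda\in\Lambda$ and any abelian subgroup $H$ of $G$, the image $H\varphi=\set{i}\times\parens{p_{\lambda i}^{-1}H}\times\set{\lambda}$ is a commutative subgroup of $\Rees{G}{I}{\Lambda}{P}$ of order $\abs{H}$. Taking $H$ of maximum size $m$ produces a commutative subsemigroup of size $m$, so the maximum size is exactly $m$. Combining this with the first two steps also delivers the characterization: a commutative subsemigroup $T$ of maximum size corresponds via $H\varphi=T$ to an abelian subgroup $H$ of $G$ of maximum size, and then $T=H\varphi=\set{i}\times\parens{p_{\lambda i}^{-1}H}\times\set{\lambda}$; conversely every such set is a commutative subgroup of maximum size.

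The argument has no serious obstacle: the genuine content is entirely in the first step, the confinement of any pairwise-commuting set to a single $\mathcal{H}$-class via Lemma~\ref{commutativity}, after which the maximum size and the list of maximizers are simply inherited from the corresponding facts about abelian subgroups of $G$ through the isomorphism $\varphi$. The only points requiring a word of care are the closure of $\set{i}\times G\times\set{\lambda}$ under multiplication (so that $T$ really sits inside a group) and the elementary observation that a finite subsemigroup of a group is a subgroup; both are standard and can be dispatched in a line each.
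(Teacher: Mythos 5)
Your argument is correct, but it takes a genuinely different route from the paper's. The paper proves this corollary by a case split: when $\abs{I}=\abs{\Lambda}=1$ it simply transports abelian subgroups through the isomorphism $G\simeq\Rees{G}{I}{\Lambda}{P}$, and when $\abs{I}>1$ or $\abs{\Lambda}>1$ it invokes the structural description of $\commgraph{\Rees{G}{I}{\Lambda}{P}}$ from Theorem~\ref{connectedness and diameter} and runs a chain of equivalences: since the center is empty (Proposition~\ref{center M[G;I,^,P] non-commutative}), maximal commutative subsemigroups are exactly maximal cliques in the commuting graph, these sit inside single connected components $C_{i\lambda}$, and they pull back under $\varphi_{i\lambda}$ to maximal cliques in $\extendedcommgraph{G}$, i.e.\ to maximal abelian subgroups of $G$. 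You bypass the graph machinery entirely: you derive the confinement of a commutative subsemigroup $T$ to a single $\mathcal{H}$-class $\set{i}\times G\times\set{\lambda}$ directly from Lemma~\ref{commutativity} (commuting elements must share first and third coordinates), then use the isomorphism $\varphi$ together with the standard fact that a finite subsemigroup of a group is a subgroup to identify $T$ with an abelian subgroup of $G$ of the same size; the converse inclusion of the sets $\set{i}\times\parens{p_{\lambda i}^{-1}H}\times\set{\lambda}$ is immediate. What your route buys: it is uniform in $\abs{I}$ and $\abs{\Lambda}$, so no case split is needed, and it makes explicit two points the paper's equivalence chain leaves tacit --- that maximal cliques in the commuting graph are genuinely subsemigroups, and that maximal cliques in $\extendedcommgraph{G}$ are maximal abelian subgroups (your finite-subsemigroup observation does the work of both). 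What the paper's route buys: it recycles Theorem~\ref{connectedness and diameter}, already proved, so the corollary falls out with almost no new computation, and it exhibits the clique/commutative-subsemigroup dictionary that the rest of the paper exploits.
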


\begin{proof}
    For each $i\in I$ and $\lambda\in\Lambda$ let $\varphi_{i\lambda}:G\to \set{i}\times G \times\set{\lambda}$ be the map defined by $x\varphi_{i\lambda}=\parens{i,p_{\lambda i}^{-1}x, \lambda}$ for all $x\in G$. It is easy to see that for each $i\in I$ and $\lambda\in\Lambda$ the map $\varphi_{i\lambda}$ is an isomorphism. We consider the following two cases.

    Case 1: Suppose that $\abs{I}=\abs{\Lambda}=1$. It follows that $G\simeq\Rees{G}{I}{\Lambda}{P}$, which implies that $S$ is an abelian subgroup of $G$ if and only if $S\varphi_{i\lambda}=\set{i}\times \parens{p_{\lambda i}^{-1}S}\times\set{\lambda}$ is an abelian subgroup of $\Rees{G}{I}{\Lambda}{P}$, where $i\in I$ and $\lambda\in\Lambda$. Thus the maximum size of a commutative subsemigroup of $\Rees{G}{I}{\Lambda}{P}$ is equal to the maximum size of an abelian subgroup of $G$, and the largest commutative subsemigroups of $\Rees{G}{I}{\Lambda}{P}$ are the groups $\set{i}\times \parens{p_{\lambda i}^{-1}H}\times\set{\lambda}$, where $H$ is a maximum-order abelian subgroup of $G$.
    
    Case 2: Suppose that $\abs{I}>1$ or $\abs{\Lambda}>1$. For each $i\in I$ and $\lambda\in\Lambda$ let $C_{i\lambda}$ be the connected component of $\commgraph{\Rees{G}{I}{\Lambda}{P}}$ whose set of vertices is $\set{i}\times G \times\set{\lambda}$. Using the characterization of $\commgraph{\Rees{G}{I}{\Lambda}{P}}$ given by Theorem~\ref{connectedness and diameter}, and the fact that $Z\parens{\Rees{G}{I}{\Lambda}{P}}=\emptyset$, we have
    \begin{align*}
        & S \text{ is a maximal commutative subsemigroup of } \Rees{G}{I}{\Lambda}{P}\\
        \iff{} & S\setminus Z\parens{\Rees{G}{I}{\Lambda}{P}} \text{ is a maximal clique in } \commgraph{\Rees{G}{I}{\Lambda}{P}}\\
        \iff{} & S \text{ is a maximal clique in } \commgraph{\Rees{G}{I}{\Lambda}{P}}\\
        \iff{} & S \text{ is a maximal clique in } C_{i\lambda}, \text{ for some } i\in I \text{ and } \lambda\in\Lambda\\
        \iff{} & S\varphi_{i\lambda}^{-1} \text{ is a maximal clique in } \extendedcommgraph{G}, \text{ for some } i\in I \text{ and } \lambda\in\Lambda\\
        \iff{} & S\varphi_{i\lambda}^{-1} \text{ is a maximal abelian subgroup of } G, \text{ for some } i\in I \text{ and } \lambda\in\Lambda,
    \end{align*}
in which case $S=\parens{S\varphi_{i\lambda}^{-1}}\varphi_{i\lambda}=\set{i}\times \parens{p_{\lambda i}^{-1}\parens{S\varphi_{i\lambda}^{-1}}}\times\set{\lambda}$. This implies that the maximum size of a commutative subsemigroup of $\Rees{G}{I}{\Lambda}{P}$ is equal to the maximum size of an abelian subgroup of $G$, and the commutative subsemigroups of $\Rees{G}{I}{\Lambda}{P}$ of maximum size are the groups $\set{i}\times \parens{p_{\lambda i}^{-1}H}\times\set{\lambda}$, where $H$ is a maximum-order abelian subgroup of $G$, $i\in I$ and $\lambda\in\Lambda$.
\end{proof}

The rest of this section is devoted to obtaining the clique number (Theorem~\ref{clique number}), girth (Corollary~\ref{girth}) and chromatic number (Theorem~\ref{chromatic number}) of $\commgraph{\Rees{G}{I}{\Lambda}{P}}$ (when $\abs{I}>1$ or $\abs{\Lambda}>1$). Additionally, in Theorem~\ref{left paths and knit degree} we also study the existence of left paths in $\commgraph{\Rees{G}{I}{\Lambda}{P}}$.

\begin{theorem}\label{clique number}
    Suppose that $\abs{I}>1$ or $\abs{\Lambda}>1$. Then
    \begin{enumerate}
        \item If $G$ is abelian, then $\cliquenumber{\commgraph{\Rees{G}{I}{\Lambda}{P}}}=\abs{G}$.
        \item If $G$ is non-abelian, then $\cliquenumber{\commgraph{\Rees{G}{I}{\Lambda}{P}}}=\abs{Z\parens{G}}+\cliquenumber{\commgraph{G}}$.
    \end{enumerate}
\end{theorem}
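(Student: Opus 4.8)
The plan is to reduce the computation to a single connected component and then invoke the structural description already in hand. Because any clique consists of pairwise adjacent vertices, all of its vertices lie in a single connected component of $\commgraph{\Rees{G}{I}{\Lambda}{P}}$; conversely, any clique of a component is a clique of the whole graph. Hence $\cliquenumber{\commgraph{\Rees{G}{I}{\Lambda}{P}}}$ equals the maximum of the clique numbers taken over the components. By Theorem~\ref{connectedness and diameter}, every component is isomorphic to $\extendedcommgraph{G}$, and since graph isomorphisms preserve clique number, the problem collapses to computing $\cliquenumber{\extendedcommgraph{G}}$.

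First I would handle the abelian case. If $G$ is abelian then by part~(1) of Lemma~\ref{commgraph and extended commgraph} we have $\extendedcommgraph{G}\cong K_{\abs{G}}$, whose clique number is $\abs{G}$; this gives statement~(1) immediately.

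For the non-abelian case, part~(2) of Lemma~\ref{commgraph and extended commgraph} gives $\extendedcommgraph{G}\cong K_{\abs{Z\parens{G}}}\graphjoin\commgraph{G}$. The remaining work is the observation that the clique number of a graph join is additive, namely $\cliquenumber{H_1\graphjoin H_2}=\cliquenumber{H_1}+\cliquenumber{H_2}$. I would prove this by noting that if $K$ is a clique of $H_1\graphjoin H_2$, then the intersections of $K$ with the vertex sets of $H_1$ and of $H_2$ are cliques of $H_1$ and $H_2$ respectively, so $\abs{K}\leqslant\cliquenumber{H_1}+\cliquenumber{H_2}$; conversely, since in the join every vertex of $H_1$ is adjacent to every vertex of $H_2$, the union of a largest clique of $H_1$ with a largest clique of $H_2$ is a clique of the join attaining the bound. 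Applying this with $\cliquenumber{K_{\abs{Z\parens{G}}}}=\abs{Z\parens{G}}$ yields $\cliquenumber{\extendedcommgraph{G}}=\abs{Z\parens{G}}+\cliquenumber{\commgraph{G}}$, which is statement~(2).

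None of the steps presents a serious obstacle: the main content is the reduction to one component (which is purely graph-theoretic once Theorem~\ref{connectedness and diameter} is available) together with the additivity of the clique number under graph join. The only point requiring a little care is to confirm that a maximum clique cannot straddle two components, which is immediate from the definition of connectivity.
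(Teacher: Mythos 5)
Your proposal is correct and takes essentially the same approach as the paper: reduce the clique number to the maximum over connected components, identify each component with $\extendedcommgraph{G}$ via Theorem~\ref{connectedness and diameter}, and then apply Lemma~\ref{commgraph and extended commgraph} together with the clique numbers of complete graphs and graph joins. The only difference is that you supply an explicit proof of the additivity of the clique number under the graph join, which the paper states without proof.
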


\begin{proof}
    We know, by Theorem~\ref{connectedness and diameter}, that all the connected components of $\commgraph{\Rees{G}{I}{\Lambda}{P}}$ are isomorphic to $\extendedcommgraph{G}$. Furthermore, Lemma~\ref{commgraph and extended commgraph} also states that $\extendedcommgraph{G}$ is isomorphic to $K_{\abs{G}}$ when $G$ is abelian, and is isomorphic to $K_{\abs{Z\parens{G}}}\graphjoin\commgraph{G}$ when $G$ is not abelian. Additionally, we know that the clique number of a complete graph is equal to its number of vertices, and the clique number of the join of two graphs is equal to the sum of the clique numbers of those two graphs. Hence
    \begin{align*}
        &\cliquenumber{\commgraph{\Rees{G}{I}{\Lambda}{P}}}\\
        ={}&\max\left\{\cliquenumber{C}: C \text{ is a connected compenent of } \commgraph{\Rees{G}{I}{\Lambda}{P}}\right\} \kern -14.7mm \\
        ={}& \cliquenumber{\extendedcommgraph{G}} & \bracks{\text{by Theorem~\ref{connectedness and diameter}}}\\
        ={}&\begin{cases}
            \cliquenumber{K_{\abs{G}}}& \text{if } G \text{ is abelian,}\\
            \cliquenumber{K_{\abs{Z\parens{G}}}\graphjoin\commgraph{G}}& \text{if } G \text{ is not abelian}
        \end{cases}&  \bracks{\text{by Lemma~\ref{commgraph and extended commgraph}}}\\
        ={}&\begin{cases}
            \cliquenumber{K_{\abs{G}}}& \text{if } G \text{ is abelian,}\\
            \cliquenumber{K_{\abs{Z\parens{G}}}}+\cliquenumber{\commgraph{G}}& \text{if } G \text{ is not abelian}
        \end{cases}\\
        ={}&\begin{cases}
            \abs{G}& \text{if } G \text{ is abelian,}\\
            \abs{Z\parens{G}}+\cliquenumber{\commgraph{G}}& \text{if } G \text{ is not abelian}.
        \end{cases}& \qedhere
    \end{align*}
\end{proof}

In Theorem~\ref{girth extended commgraph G} we prove that the existence of cycles in $\extendedcommgraph{G}$ depends exclusively on the size of $G$. Additionally, we see that when $\extendedcommgraph{G}$ contains cycles there is only one possible value for its girth. In Corollary~\ref{girth} we see that, when at least one of the index sets $I$ and $\Lambda$ is not a singleton, the same happens with $\commgraph{\Rees{G}{I}{\Lambda}{P}}$.

\begin{theorem}\label{girth extended commgraph G}
    $\extendedcommgraph{G}$ contains cycles if and only if $\abs{G}\geqslant 3$, in which case $\girth{\extendedcommgraph{G}}=3$.
\end{theorem}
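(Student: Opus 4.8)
The plan is to prove the two directions separately, and for the ``if'' direction to exhibit a triangle, since in a simple graph the shortest possible cycle has length $3$. The ``only if'' direction is immediate: any cycle in a simple graph passes through at least three pairwise distinct vertices, so if $\extendedcommgraph{G}$ contains a cycle then $\abs{G}\geqslant 3$. Equivalently, when $\abs{G}\leqslant 2$ the graph has at most two vertices and hence admits no cycle at all.

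For the ``if'' direction, suppose $\abs{G}\geqslant 3$. The key observation is that $1_G$ commutes with every element of $G$, and so $1_G$ is adjacent to every other vertex of $\extendedcommgraph{G}$; thus to produce a triangle it suffices to find two distinct non-identity elements that commute with each other. First I would consider the case where $G$ contains an element $x$ of order at least $3$. Then $1_G$, $x$ and $x^2$ are three pairwise distinct elements lying in the cyclic subgroup $\angbracks{x}$, so they commute pairwise and $\set{1_G, x, x^2}$ is a triangle.

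It remains to treat the case where every non-identity element of $G$ has order $2$. Here I would invoke the standard fact that a group in which every element squares to the identity is abelian, so that $G$ is an elementary abelian $2$-group; in particular $\abs{G}$ is a power of $2$, whence $\abs{G}\geqslant 3$ forces $\abs{G}\geqslant 4$. Choosing any three distinct elements then yields a triangle, since all pairs commute. In either case $\extendedcommgraph{G}$ contains a triangle, so it contains a cycle and $\girth{\extendedcommgraph{G}}\leqslant 3$; combined with the fact that a simple graph has no cycle of length less than $3$, this gives $\girth{\extendedcommgraph{G}}=3$.

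I expect no serious obstacle here; the only point requiring care is the exhaustive case split on the orders of elements. The essential ingredient is the lemma that a group of exponent $2$ is abelian, which closes the case that would otherwise leave open the possibility of having only non-commuting pairs among the non-identity elements, and thereby guarantees a pair of commuting non-identity elements in every situation.
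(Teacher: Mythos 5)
Your proof is correct, but it takes a genuinely different route from the paper. The paper splits on whether $G$ is abelian: in the abelian case $\extendedcommgraph{G}\cong K_{\abs{G}}$ and the claim is immediate, while in the non-abelian case it picks a non-commuting pair $x,y$ and splits again on whether one of them differs from its inverse, using the triangle $\set{1_G, x, x^{-1}}$ when $x\neq x^{-1}$, and in the remaining case (both $x$ and $y$ involutions) the clever identity $yx=y^{-1}x^{-1}=\parens{xy}^{-1}$ to get the explicit triangle $\set{1_G, xy, yx}$. You instead split on element orders: an element $x$ of order at least $3$ gives the triangle $\set{1_G,x,x^2}$, and otherwise $G$ has exponent $2$, whence the standard lemma that exponent-$2$ groups are abelian makes the residual case trivial. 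Your decomposition is uniform over abelian and non-abelian groups and replaces the paper's constructive trick with one imported group-theoretic fact; the paper's proof is fully self-contained and exhibits a triangle built directly from the non-commuting pair, which is arguably more informative about where triangles live in $\extendedcommgraph{G}$. One small simplification to yours: once you know $G$ is abelian in the exponent-$2$ case, $\abs{G}\geqslant 3$ already yields three pairwise commuting distinct elements, so the detour through ``$\abs{G}$ is a power of $2$, hence $\abs{G}\geqslant 4$'' is unnecessary (though harmless).
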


\begin{proof}

Suppose that $G$ is abelian. As a result of Lemma~\ref{commgraph and extended commgraph}, $\extendedcommgraph{G}$ is isomorphic to $K_{\abs{G}}$. If $\abs{G}\leqslant 2$, then $\extendedcommgraph{G}$ has at most two vertices, which means that $\extendedcommgraph{G}$ has no cycles. If $\abs{G}\geqslant 3$, then $\extendedcommgraph{G}$ contains at least three vertices adjacent to each other. Therefore $\extendedcommgraph{G}$ contains at least one cycle of length $3$ and, consequently, $\girth{\extendedcommgraph{G}}=3$.

Now suppose that $G$ is not abelian. (Then $\abs{G}\geqslant 3$.) There exist distinct $x,y\in G\setminus Z\parens{G}$ such that $xy\neq yx$. We consider the following two cases.

Case $1$: Suppose that $x\neq x^{-1}$ or $y\neq y^{-1}$. Assume, without loss of generality, that $x\neq x^{-1}$. Clearly, $x\neq 1_G$ and $x^{-1}\neq 1_G$. Since $x$, $x^{-1}$ and $1_G$ commute with each other then $x,x^{-1},1_G,x$ is a cycle (of length $3$) in $\extendedcommgraph{G}$.

Case $2$: Suppose that $x=x^{-1}$ and $y=y^{-1}$. It follows that $yx=y^{-1}x^{-1}=(xy)^{-1}$, which implies that $xy$ and $yx$ commute. Additionally, $xy$ and $yx$ also commute with $1_G$, and $xy$, $yx$ and $1_G$ are pairwise distinct. As a result, $xy, yx, 1_G, xy$ is a cycle (of length $3$) in $\extendedcommgraph{G}$.

    In both cases we proved that $\extendedcommgraph{G}$ contains a cycle of length $3$. Hence $\girth{\extendedcommgraph{G}}=3$.
\end{proof}

The next corollary is a direct consequence of Theorems~\ref{connectedness and diameter} and \ref{girth extended commgraph G}.

\begin{corollary}\label{girth}
    Suppose that $\abs{I}>1$ or $\abs{\Lambda}>1$. Then
    \begin{enumerate}
        \item If $\abs{G}\leqslant 2$, then $\commgraph{\Rees{G}{I}{\Lambda}{P}}$ has no cycles.
        \item If $\abs{G}\geqslant 3$, then $\commgraph{\Rees{G}{I}{\Lambda}{P}}$ contains a cycle of length $3$ and, consequently, $\girth{\commgraph{\Rees{G}{I}{\Lambda}{P}}}=3$.
    \end{enumerate}
\end{corollary}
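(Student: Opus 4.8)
The plan is to reduce the statement entirely to Theorems~\ref{connectedness and diameter} and \ref{girth extended commgraph G}, exploiting the elementary fact that a cycle, being a closed walk through pairwise distinct vertices, lies inside a single connected component of the graph. First I would recall from Theorem~\ref{connectedness and diameter} that, under the standing hypothesis $\abs{I}>1$ or $\abs{\Lambda}>1$, every connected component of $\commgraph{\Rees{G}{I}{\Lambda}{P}}$ is isomorphic to $\extendedcommgraph{G}$. Since a graph isomorphism carries cycles to cycles of the same length, this transfers the entire cycle structure of $\extendedcommgraph{G}$ to each component of $\commgraph{\Rees{G}{I}{\Lambda}{P}}$, and consequently both the existence of cycles and the value of the girth can be read off component by component.

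For part~(1), I would argue that if $\abs{G}\leqslant 2$ then, by Theorem~\ref{girth extended commgraph G}, $\extendedcommgraph{G}$ contains no cycles; hence none of the components of $\commgraph{\Rees{G}{I}{\Lambda}{P}}$ contains a cycle, and since any cycle would have to be confined to a single component, $\commgraph{\Rees{G}{I}{\Lambda}{P}}$ has no cycles at all. For part~(2), if $\abs{G}\geqslant 3$ then Theorem~\ref{girth extended commgraph G} gives $\girth{\extendedcommgraph{G}}=3$, so each component of $\commgraph{\Rees{G}{I}{\Lambda}{P}}$ contains a cycle of length $3$. As $3$ is the least possible length of a cycle in a simple graph, and the girth of a graph possessing cycles is the minimum of the girths taken over its components, it follows at once that $\girth{\commgraph{\Rees{G}{I}{\Lambda}{P}}}=3$.

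There is no genuine obstacle here: the corollary is a direct consequence of the two cited theorems, exactly as announced. The only point requiring any care is the routine observation that cycles cannot cross between connected components, so that the girth of the whole graph is governed by the girths of its (mutually isomorphic) components. I would state this remark explicitly but not belabour it, since it is standard, and the substantive content has already been established in Theorems~\ref{connectedness and diameter} and \ref{girth extended commgraph G}.
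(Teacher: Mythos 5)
Your proposal is correct and is essentially the paper's own argument: the paper gives no separate proof, stating only that the corollary is a direct consequence of Theorems~\ref{connectedness and diameter} and \ref{girth extended commgraph G}, and your write-up simply makes explicit the routine details (cycles lie within a single connected component, each component is isomorphic to $\extendedcommgraph{G}$) that the paper leaves tacit.
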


\begin{theorem}\label{chromatic number}
    Suppose that $\abs{I}>1$ or $\abs{\Lambda}>1$. Then
    \begin{enumerate}
        \item If $G$ is abelian, then $\chromaticnumber{\commgraph{\Rees{G}{I}{\Lambda}{P}}}=\abs{G}$.
        \item If $G$ is non-abelian, then $\chromaticnumber{\commgraph{\Rees{G}{I}{\Lambda}{P}}}=\abs{Z\parens{G}}+\chromaticnumber{\commgraph{G}}$.
    \end{enumerate}
\end{theorem}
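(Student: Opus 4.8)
The plan is to follow the proof of Theorem~\ref{clique number} line for line, substituting the chromatic number for the clique number and the matching colouring identities for the clique-number identities. Two structural facts are already in hand: by Theorem~\ref{connectedness and diameter} every connected component of $\commgraph{\Rees{G}{I}{\Lambda}{P}}$ is isomorphic to $\extendedcommgraph{G}$, and by Lemma~\ref{commgraph and extended commgraph} the graph $\extendedcommgraph{G}$ is isomorphic to $K_{\abs{G}}$ when $G$ is abelian and to $K_{\abs{Z\parens{G}}}\graphjoin\commgraph{G}$ when $G$ is not abelian.

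First I would reduce the computation to a single component. The chromatic number of any graph equals the maximum of the chromatic numbers of its connected components, because the components may be coloured independently with a shared palette. Since all components of $\commgraph{\Rees{G}{I}{\Lambda}{P}}$ are isomorphic to $\extendedcommgraph{G}$, this gives $\chromaticnumber{\commgraph{\Rees{G}{I}{\Lambda}{P}}}=\chromaticnumber{\extendedcommgraph{G}}$.

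Next I would invoke the two standard identities $\chromaticnumber{K_n}=n$ and $\chromaticnumber{H_1\graphjoin H_2}=\chromaticnumber{H_1}+\chromaticnumber{H_2}$. The join identity holds because every vertex of $H_1$ is adjacent to every vertex of $H_2$, so no colour can be used in both factors; thus any proper colouring requires at least $\chromaticnumber{H_1}+\chromaticnumber{H_2}$ colours, and taking optimal colourings of $H_1$ and $H_2$ on disjoint palettes attains this bound. Feeding in the description of $\extendedcommgraph{G}$ from Lemma~\ref{commgraph and extended commgraph} then yields $\abs{G}$ in the abelian case and $\abs{Z\parens{G}}+\chromaticnumber{\commgraph{G}}$ in the non-abelian case, as required.

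I do not expect a genuine obstacle here, since every step transcribes the clique-number argument with the appropriate colouring identity inserted in place of the clique identity. The only point deserving explicit care is the join identity $\chromaticnumber{H_1\graphjoin H_2}=\chromaticnumber{H_1}+\chromaticnumber{H_2}$, which I would state and justify briefly as above in order to keep the argument self-contained rather than rely on an external citation.
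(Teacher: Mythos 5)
Your proposal is correct and matches the paper's own proof essentially line for line: the paper likewise reduces $\chromaticnumber{\commgraph{\Rees{G}{I}{\Lambda}{P}}}$ to $\chromaticnumber{\extendedcommgraph{G}}$ via Theorem~\ref{connectedness and diameter} and then applies $\chromaticnumber{K_n}=n$ together with the join identity to the description in Lemma~\ref{commgraph and extended commgraph}. Your brief justification of the join identity is a small addition the paper omits, but it does not change the argument.
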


\begin{proof}
    We start by noticing that the chromatic number of a complete graph is equal its number of vertices, and the chromatic number of the join of two graphs is equal to the sum of the chromatic numbers of those two graphs.
    
    By Lemma~\ref{commgraph and extended commgraph} and Theorem~\ref{connectedness and diameter} we know that, if $G$ is abelian, then all the connected components of $\commgraph{\Rees{G}{I}{\Lambda}{P}}$ are isomorphic to $K_{\abs{G}}$ and, if $G$ is not abelian, then all the connected components of $\commgraph{\Rees{G}{I}{\Lambda}{P}}$ are isomorphic to $K_{\abs{Z\parens{G}}}\graphjoin\commgraph{G}$. Thus
    \begin{align*}
        &\chromaticnumber{\commgraph{\Rees{G}{I}{\Lambda}{P}}}\\
        ={}&\max\left\{\chromaticnumber{C}: C \text{ is a connected component of } \commgraph{\Rees{G}{I}{\Lambda}{P}}\right\}  \kern -14.8mm\\
        ={}& \chromaticnumber{\extendedcommgraph{G}}& \bracks{\text{by Theorem~\ref{connectedness and diameter}}}\\
        ={}&\begin{cases}
            \chromaticnumber{K_{\abs{G}}}& \text{if } G \text{ is abelian,}\\
            \chromaticnumber{K_{\abs{Z\parens{G}}}\graphjoin\commgraph{G}}& \text{if } G \text{ is not abelian}
        \end{cases}& \bracks{\text{by Lemma~\ref{commgraph and extended commgraph}}}\\
        ={}&\begin{cases}
            \chromaticnumber{K_{\abs{G}}}& \text{if } G \text{ is abelian,}\\
            \chromaticnumber{K_{\abs{Z\parens{G}}}}+\chromaticnumber{\commgraph{G}}& \text{if } G \text{ is not abelian}
        \end{cases}\\
        ={}&\begin{cases}
            \abs{G}& \text{if } G \text{ is abelian,}\\
            \abs{Z\parens{G}}+\chromaticnumber{\commgraph{G}}& \text{if } G \text{ is not abelian}.
        \end{cases}\qedhere
    \end{align*}
\end{proof}

\begin{theorem}\label{left paths and knit degree}
    Suppose that $\Rees{G}{I}{\Lambda}{P}$ is not commutative. Then $\commgraph{\Rees{G}{I}{\Lambda}{P}}$ contains no left paths.
\end{theorem}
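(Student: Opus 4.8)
The plan is to argue by contradiction: assume that $\commgraph{\Rees{G}{I}{\Lambda}{P}}$ contains a left path $x_1,\ldots,x_n$, write $x_k=\parens{i_k,g_k,\lambda_k}$ for each $k$, and derive the forbidden conclusion $x_1=x_n$. First I would observe that, since a left path requires $x_1\neq x_n$, it must have length at least $1$, so $n\geqslant 2$ and the path genuinely contains edges.

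The first real step is to collapse the path into a single $\mathcal{H}$-class. Being a path in the commuting graph, each pair $x_j,x_{j+1}$ of consecutive vertices is adjacent and hence commutes; by Lemma~\ref{commutativity} this forces $i_j=i_{j+1}$ and $\lambda_j=\lambda_{j+1}$. Chaining these equalities along the path yields $i_1=\cdots=i_n=:i$ and $\lambda_1=\cdots=\lambda_n=:\lambda$, so every $x_k$ lies in the $\mathcal{H}$-class $\set{i}\times G\times\set{\lambda}$. This is the crucial reduction, because within a fixed $\mathcal{H}$-class the multiplication behaves essentially like that of $G$ (translated by $p_{\lambda i}$), and in particular cancellation becomes available.

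With the coordinates pinned down I would apply the defining equation of a left path for the single index $k=1$, namely $x_1x_1=x_nx_1$. Expanding both products gives $\parens{i,g_1p_{\lambda i}g_1,\lambda}=\parens{i,g_np_{\lambda i}g_1,\lambda}$, whence $g_1p_{\lambda i}g_1=g_np_{\lambda i}g_1$; cancelling the common right factor $p_{\lambda i}g_1$ in the group $G$ leaves $g_1=g_n$. Thus $x_1=\parens{i,g_1,\lambda}=\parens{i,g_n,\lambda}=x_n$, contradicting $x_1\neq x_n$ and completing the argument.

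The one point that deserves care — and which I regard as the main, if modest, obstacle — is that $\Rees{G}{I}{\Lambda}{P}$ is not right-cancellative as a whole, so one cannot simply cancel $x_1$ in $x_1x_1=x_nx_1$ to conclude at once. The path hypothesis is exactly what rescues the argument: it confines all the vertices to one $\mathcal{H}$-class, where the third coordinate is constant and genuine cancellation in $G$ is legitimate. Notably, only the instance $k=1$ of the left-path condition is needed; the remaining equations in the definition are not required.
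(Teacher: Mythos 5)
Your proposal is correct and follows essentially the same route as the paper: use Lemma~\ref{commutativity} on consecutive edges to force all vertices of the left path into a single $\mathcal{H}$-class, then apply the left-path equation $x_1x_1=x_nx_1$, expand, and cancel in $G$ to get $x_1=x_n$, a contradiction. Your closing remarks (that only the $k=1$ instance of the left-path condition is needed, and that confinement to one $\mathcal{H}$-class is what legitimizes cancellation) are accurate observations about the same argument, not a different proof.
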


\begin{proof}
    Assume, with the aim of obtaining a contradiction, that $\commgraph{\Rees{G}{I}{\Lambda}{P}}$ contains at least one left path. Let
    \begin{displaymath}
        \parens{i_1,x_1,\lambda_1},\parens{i_2,x_2,\lambda_2},\dots,\parens{i_k,x_k,\lambda_k}
    \end{displaymath}
be a left path in $\commgraph{\Rees{G}{I}{\Lambda}{P}}$.

    We have $\parens{i_j,x_j,\lambda_j}\parens{i_{j+1},x_{j+1},\lambda_{j+1}}=\parens{i_{j+1},x_{j+1},\lambda_{j+1}}\parens{i_j,x_j,\lambda_j}$ for all $j\in\set{1,\ldots,k-1}$. By Lemma~\ref{commutativity}, $i_j=i_{j+1}$ and $\lambda_j=\lambda_{j+1}$ for all $j\in\set{1,\ldots,k-1}$, which implies that $i_1=i_2=\cdots=i_k$ and $\lambda_1=\lambda_2=\cdots=\lambda_k$.

    We also have that $\parens{i_1,x_1,\lambda_1}\parens{i_1,x_1,\lambda_1}=\parens{i_k,x_k,\lambda_k}\parens{i_1,x_1,\lambda_1}$. Hence $\parens{i_1,x_1p_{\lambda_1 i_1}x_1,\lambda_1}=\parens{i_k,x_kp_{\lambda_k i_1}x_1,\lambda_1}$ and, as a consequence, $x_1p_{\lambda_1 i_1}x_1=x_kp_{\lambda_k i_1}x_1\allowbreak =x_kp_{\lambda_1 i_1}x_1$. Thus $x_1=x_k$. Therefore $\parens{i_1,x_1,\lambda_1}=\parens{i_k,x_k,\lambda_k}$, which is a contradiction. Hence $\commgraph{\Rees{G}{I}{\Lambda}{P}}$ contains no left paths.
\end{proof}

The previous result states that $\commgraph{\Rees{G}{I}{\Lambda}{P}}$ has no left paths. In particular, this is also true when $\abs{I}=\abs{\Lambda}=1$. In this case we have $\Rees{G}{I}{\Lambda}{P}\simeq G$ and, since $\commgraph{\Rees{G}{I}{\Lambda}{P}}$ has no left paths, then $\commgraph{G}$ has no left paths as well. This proves the following corollary.

\begin{corollary}
    The commuting graph of a finite non-abelian group does not contain left paths.
\end{corollary}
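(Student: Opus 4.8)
The plan is to derive this as the degenerate case of Theorem~\ref{left paths and knit degree}, in which both index sets collapse to a single point. First I would take $I$ and $\Lambda$ to be singletons and recall the identification $\Rees{G}{I}{\Lambda}{P}\simeq G$ noted at the start of this section (see \cite[Exercise 4.1]{Nine_chapters_Cain}). Since $G$ is non-abelian, this Rees matrix semigroup is non-commutative, so the hypothesis of Theorem~\ref{left paths and knit degree} is satisfied and I may conclude that $\commgraph{\Rees{G}{I}{\Lambda}{P}}$ contains no left paths.

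The second step is to transport this conclusion back to $\commgraph{G}$. The key point requiring care is that a left path is not a purely graph-theoretic object: its defining condition $x_1x_i=x_nx_i$ refers to the semigroup multiplication, so a bare graph isomorphism would not suffice. Instead I would use the semigroup isomorphism $\theta\colon G\to\Rees{G}{I}{\Lambda}{P}$ underlying the identification above. Such a $\theta$ restricts to a bijection between the vertex sets $G$ and $I\times G\times\Lambda$ that simultaneously preserves adjacency (because $xy=yx$ if and only if $(x\theta)(y\theta)=(y\theta)(x\theta)$) and the products appearing in the definition of a left path (because $\theta$ is a homomorphism).

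Finally I would argue by contradiction. If $x_1,\ldots,x_n$ were a left path in $\commgraph{G}$, then applying $\theta$ termwise would yield a sequence $x_1\theta,\ldots,x_n\theta$ that is again a path in $\commgraph{\Rees{G}{I}{\Lambda}{P}}$, still satisfies $x_1\theta\neq x_n\theta$, and still satisfies $(x_1\theta)(x_i\theta)=(x_n\theta)(x_i\theta)$ for every $i\in\braces{1,\ldots,n}$; that is, it would be a left path in $\commgraph{\Rees{G}{I}{\Lambda}{P}}$, contradicting the first step. I do not expect any serious obstacle here: the only point needing attention is the one just flagged, namely that the transfer must be carried out through the semigroup isomorphism $\theta$ rather than through a mere isomorphism of the underlying commuting graphs.
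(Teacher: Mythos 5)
Your proposal is correct and follows essentially the same route as the paper: specialize Theorem~\ref{left paths and knit degree} to the case $\abs{I}=\abs{\Lambda}=1$, use $\Rees{G}{I}{\Lambda}{P}\simeq G$, and transfer the absence of left paths back to $\commgraph{G}$. The only difference is that you spell out the transfer step explicitly --- noting that left paths are defined via the multiplication, so the transport must go through the semigroup isomorphism rather than a mere graph isomorphism --- a point the paper's one-line argument leaves implicit; this is a welcome clarification, not a deviation.
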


\section{Properties of commuting graphs of completely simple semigroups}\label{Properties of commuting graphs of completely simple semigroups}

This section is devoted to the study of the commuting graphs of completely simple semigroups. In particular, we characterize the graphs that arise as commuting graphs of completely simple semigroups. We also determine the possible values for the clique number, girth and chromatic number of the commuting graph of a completely simple semigroup, as well as the possible values for the knit degree of a completely simple semigroup.

It has been proved in \cite{Semigroup_whose_commuting_graph_has_diameter_n} that for each positive integer $n\geqslant 3$ there exists a group whose commuting graph has diameter equal to $n$. There are also groups such that the diameter of its commuting graph is equal to $2$: \cite{Commuting_graph_group_diameter_2} gave some examples of such groups. (Notice that there are no semigroups whose commuting graph has diameter equal to $1$.) Since groups are completely simple semigroups, then this means that for each positive integer $n\geqslant 2$ there is also a completely simple semigroup whose commuting graph has diameter equal to $n$. Using some of the results from Section~\ref{Commuting graph of a Rees matrix semigroup over a group}, as well as the characterization of completely simple groups with Rees matrix semigroups over groups, we can see that for each positive integer $n\geqslant 2$, the only completely simple semigroups whose commuting graphs have diameter $n$ are groups. 

Since the possible values for the diameter of the commuting graph of a completely simple semigroup are known, we focus on other properties: Corollary~\ref{clique number completely simple semigroup}, Theorem~\ref{girth completely simple semigroup}, Corollary~\ref{chromatic number completely simple semigroup} and Corollary~\ref{knit degree completely simple semigroup} concern the clique number, girth, chromatic number and knit degree, respectively.

\begin{corollary}\label{clique number completely simple semigroup}
    For each $n\in\mathbb{N}$, there is a completely simple semigroup whose commuting graph has clique number equal to $n$.
\end{corollary}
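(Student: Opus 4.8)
The plan is to realize each clique number $n \in \mathbb{N}$ by exhibiting a concrete completely simple semigroup whose commuting graph has exactly that clique number. Since every completely simple semigroup is isomorphic to some $\Rees{G}{I}{\Lambda}{P}$, and Theorem~\ref{clique number} already computes the clique number in terms of the underlying group $G$, the task reduces to choosing $G$, $I$, $\Lambda$ and $P$ appropriately. The cleanest route is to force the non-singleton hypothesis of Theorem~\ref{clique number} (so that we are genuinely in the semigroup, not group, regime) while keeping $G$ \emph{abelian}, because in the abelian case Theorem~\ref{clique number}(1) gives the transparent formula $\cliquenumber{\commgraph{\Rees{G}{I}{\Lambda}{P}}} = \abs{G}$.

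Concretely, first I would fix $\abs{I} = 2$ and $\abs{\Lambda} = 1$ (or any choice with $\abs{I}>1$ or $\abs{\Lambda}>1$), which guarantees by Proposition~\ref{center M[G;I,^,P] non-commutative} that the semigroup is non-commutative and hence its commuting graph is well defined and non-empty. Then I would take $G$ to be any abelian group of order $n$; the cyclic group $\mathbb{Z}_n = \mathbb{Z}/n\mathbb{Z}$ is the natural choice. The matrix $P$ may be taken arbitrary (for instance, all entries equal to $1_G$), since Theorem~\ref{connectedness and diameter} shows the commuting graph does not depend on $P$. With these choices $\Rees{\mathbb{Z}_n}{I}{\Lambda}{P}$ is a completely simple semigroup, and Theorem~\ref{clique number}(1) immediately yields
\begin{displaymath}
    \cliquenumber{\commgraph{\Rees{\mathbb{Z}_n}{I}{\Lambda}{P}}} = \abs{\mathbb{Z}_n} = n.
\end{displaymath}

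There is essentially no obstacle here, since all the heavy lifting has been done in Theorem~\ref{clique number}; the only point requiring a moment's care is the boundary case $n = 1$. If $\mathbb{N}$ includes $1$, then $G = \mathbb{Z}_1$ is trivial and $\abs{G} = 1$, so the formula still gives clique number $1$ (the commuting graph is a disjoint union of isolated vertices, one per $\mathcal{H}$-class, whose largest clique is a single vertex); this is consistent and needs only a one-line remark. Thus the corollary follows directly by instantiating Theorem~\ref{clique number}(1) with an abelian group of the desired order, and no separate non-abelian construction is needed.
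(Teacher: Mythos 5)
Your proposal is correct and takes essentially the same route as the paper: both proofs instantiate the construction with the cyclic group $C_n$ (your $\mathbb{Z}_n$), choose $I$ and $\Lambda$ with at least one non-singleton, and combine Proposition~\ref{center M[G;I,^,P] non-commutative} with Theorem~\ref{clique number}(1) to conclude that the clique number is $\abs{C_n}=n$. Your added remark on the boundary case $n=1$ is a correct refinement that the paper leaves implicit.
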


\begin{proof}
    Let $n\in\mathbb{N}$. We consider the cyclic group $C_n$ of order $n$, which is an abelian group. Let $I$ and $\Lambda$ be index sets with at least one being a non-singleton, and let $P$ be a $\Lambda\times I$ matrix with entries belonging to $C_n$. By Proposition~\ref{center M[G;I,^,P] non-commutative}, $\Rees{C_n}{I}{\Lambda}{P}$ is not commutative and by Theorem~\ref{clique number} $\cliquenumber{\commgraph{\Rees{C_n}{I}{\Lambda}{P}}}=\abs{C_n}=n$, which concludes the proof.
\end{proof}

Next we determine the possible values for the girth of the commuting graph of a group. We then use this result to show in Theorem~\ref{girth completely simple semigroup} that those are also the values that are possible for the commuting graph of a completely simple semigroup.

\begin{proposition}\label{girth commuting graph of a group}
    Suppose that $G$ is non-abelian. If $\commgraph{G}$ contains at least one cycle, then $\girth{\commgraph{G}}=3$.
\end{proposition}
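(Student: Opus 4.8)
The plan is to show that if $\commgraph{G}$ contains any cycle at all, then it must contain a cycle of length $3$, which immediately gives $\girth{\commgraph{G}}=3$ since $3$ is the smallest possible cycle length in a simple graph. The key observation is the following: the identity element $1_G$ commutes with every element of $G$, so $1_G$ is adjacent in $\extendedcommgraph{G}$ to every other vertex. However, $1_G$ is a central element, so it is \emph{not} a vertex of $\commgraph{G}$; thus I cannot directly exploit $1_G$ as I did in the proof of Theorem~\ref{girth extended commgraph G}. This is exactly where the argument for $\commgraph{G}$ diverges from the (easier) argument for $\extendedcommgraph{G}$.

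First I would suppose that $\commgraph{G}$ contains a cycle; since the girth is the length of a shortest cycle, it suffices to produce a triangle (a $3$-cycle) inside $\commgraph{G}$. Take any cycle and let $a,b$ be two adjacent vertices on it, so $a,b\in G\setminus Z(G)$ are distinct, non-central, and commute ($ab=ba$). The natural candidate for a third vertex that commutes with both $a$ and $b$ is an element built from $a$ and $b$ themselves. The cleanest choice is the product $ab$ (equivalently $ba$): since $a$ and $b$ commute, $ab$ commutes with $a$ and with $b$. So $\{a,b,ab\}$ will form a triangle provided $ab$ is itself a non-central vertex and is distinct from both $a$ and $b$.

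The hard part will be verifying these non-degeneracy conditions, and handling the cases where they fail. The element $ab$ is distinct from $a$ iff $b\neq 1_G$, and distinct from $b$ iff $a\neq 1_G$; since $a,b$ are non-central they are certainly not the identity, so $ab\notin\{a,b\}$. The genuinely delicate point is whether $ab$ could be \emph{central}. If $ab\in Z(G)$, then $ab$ is not a vertex of $\commgraph{G}$ and the triangle $\{a,b,ab\}$ collapses. In that case I would switch to a different third vertex: for instance $a^{-1}$ or $b^{-1}$, which are non-central (the inverse of a non-central element is non-central) and commute with $a$ and $b$ respectively. One can also consider the subgroup generated by $a$ and $b$, which is abelian whenever $a,b$ commute, and search within it for an element $c\notin Z(G)$ with $c\notin\{a,b\}$; the abelianness guarantees $c$ commutes with both $a$ and $b$. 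I expect the main obstacle to be organizing these alternatives into a clean case analysis so that in every situation a genuinely non-central third vertex, distinct from $a$ and $b$, is exhibited — possibly by arguing that if all the obvious candidates ($ab$, $a^{-1}$, $b^{-1}$, powers of $a$ or $b$) were central, one could derive that $a$ or $b$ is itself central, contradicting the hypothesis.
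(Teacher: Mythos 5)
Your generic case is correct and in fact slicker than the paper's argument: if $a,b$ are distinct adjacent vertices of $\commgraph{G}$, then $ab=ba$ implies that $ab$ commutes with both $a$ and $b$, and $ab\notin\{a,b\}$ since $a,b\neq 1_G$; so whenever $ab\notin Z(G)$ the set $\{a,b,ab\}$ is a triangle. The genuine gap is the case you yourself flag, $ab\in Z(G)$, and none of your fallbacks closes it. The inverses $a^{-1},b^{-1}$ never fail by being central (the inverse of a non-central element is non-central), but they can fail by coinciding with $a$ or $b$: if $b=a^{-1}$ (so that $ab=1_G\in Z(G)$) then $a^{-1}=b$ and $b^{-1}=a$, and if moreover $a$ has order $3$ then $\langle a,b\rangle=\{1_G,a,a^{-1}\}$, so \emph{every} candidate you draw from $\langle a,b\rangle$ --- the product, the inverses, all powers --- is either $a$, $b$, or central. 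Your hoped-for endgame (``if all the obvious candidates were central, one could derive that $a$ or $b$ is itself central'') is simply false: in $G=C_3\times S_3$ take $a=(g,(1\,2\,3))$ with $g$ a generator of $C_3$, and $b=a^{-1}$; both are non-central, the edge $\{a,b\}$ genuinely lies on a cycle (the vertex $(1_{C_3},(1\,2\,3))$ is non-central and commutes with both), yet $\langle a,b\rangle\setminus Z(G)=\{a,b\}$. So no case analysis confined to the subgroup generated by the two endpoints of one edge can succeed.

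What is missing is any use of the cycle hypothesis beyond extracting a single edge --- and that is exactly what rescues the bad case. If $ab=z\in Z(G)$, then $b=a^{-1}z$, whence $C_G(b)=C_G(a^{-1})=C_G(a)$; now let $c$ be the neighbour of $b$ on your cycle other than $a$ (it exists and is distinct from both $a$ and $b$ because a cycle has length at least $3$ and pairwise distinct vertices). Then $c$ commutes with $b$, hence with $a$, and $\{a,b,c\}$ is a triangle. Adding this one step to your generic case would give a complete proof, indeed a shorter one than the paper's. For comparison, the paper splits instead on involutions: if some cycle vertex satisfies $x_1\neq x_1^{-1}$, it forms the triangle on $x_1$, $x_1^{-1}$ and some $x\in\{x_2,x_n\}\setminus\{x_1^{-1}\}$; if every cycle vertex is an involution, it either uses $x_1,x_2,x_3$ directly (when $x_1x_3=x_3x_1$) or the triangle on $x_2$, $x_1x_3$ and $(x_1x_3)^{-1}$, after checking these last two are non-central. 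Note that both of the paper's cases likewise draw on cycle vertices beyond a single edge --- the ingredient your sketch sets up but never exploits.
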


\begin{proof}
    Let $x_1,x_2,\cdots,x_n,x_1$ be a cycle in $\commgraph{G}$ (where $n\geqslant 3)$. We consider the following two cases.

    Case 1: Assume that there exists $i\in\braces{1,\ldots,n}$ such that $x_i\neq x_i^{-1}$. Assume, without loss of generality, that $i=1$. We know that $x_1$ is adjacent to $x_2$ and $x_n$, which implies that $x_1$ commutes with $x_2$ and $x_n$. Therefore $x_1^{-1}$ also commutes with $x_2$ and $x_n$. Note that we have $x_1^{-1}\in G\setminus Z\parens{G}$ because $x_1\in G\setminus Z\parens{G}$ (and, consequently, $x_1^{-1}$ is a vertex of $\commgraph{G}$). Let $x\in\braces{x_2,x_n}\setminus\braces{x_1^{-1}}$. Then $x_1$, $x_1^{-1}$ and $x$ are pairwise distinct and they commute with each other. Hence $x_1,x_1^{-1},x,x_1$ is a cycle (of length $3$) in $\commgraph{G}$.

    Case 2: Assume that $x_i= x_i^{-1}$ for all $i\in\braces{1,\ldots,n}$. If $x_1x_3=x_3x_1$ then $x_1,x_2,x_3,x_1$ is a cycle (of length $3$) in $\commgraph{G}$.
    
    Now, assume that $x_1x_3\neq x_3x_1$. Then $\parens{x_1x_3}^{-1}=x_3^{-1}x_1^{-1}=x_3x_1\neq x_1x_3$. Since $x_2=x_2^{-1}$, then we also have $x_2\neq x_1x_3$ and $x_2\neq \parens{x_1x_3}^{-1}$. Notice that $\parens{x_1x_3}x_1=x_1\parens{x_3x_1}\neq x_1\parens{x_1x_3}$, which implies that $x_1x_3, \parens{x_1x_3}^{-1}\in G\setminus Z\parens{G}$ (that is, $x_1x_3$ and $\parens{x_1x_3}^{-1}$ are vertices of $\commgraph{G}$). Since $x_2$ is adjacent to $x_1$ and $x_3$, then $x_2$ commutes with $x_1$ and $x_3$. Thus $x_2$ commutes with $x_1x_3$, and consequently, $x_2$ also commutes with $\parens{x_1x_3}^{-1}$. Therefore $x_2,x_1x_3,\parens{x_1x_3}^{-1},x_2$ is a cycle (of length $3$) in $\commgraph{G}$.

    In both cases we exhibit a cycle of length $3$ in $\commgraph{G}$. Thus the girth of $\commgraph{G}$ is $3$.
\end{proof}

We observe that Proposition~\ref{girth commuting graph of a group} is a particular case of the next theorem.

\begin{theorem}\label{girth completely simple semigroup}
    Let $S$ be a non-commutative completely simple semigroup. If $\commgraph{S}$ contains at least one cycle, then $\girth{\commgraph{S}}=3$.
\end{theorem}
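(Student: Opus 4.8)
The plan is to reduce the statement about a completely simple semigroup $S$ to the structural results already established for Rees matrix semigroups. By Theorem~\ref{completely simple semigroup <=> Rees matrix construction}, we may assume $S = \Rees{G}{I}{\Lambda}{P}$ for some finite group $G$, finite index sets $I$ and $\Lambda$, and matrix $P$. The proof then splits according to whether $\abs{I} = \abs{\Lambda} = 1$ or at least one of $I$, $\Lambda$ is not a singleton.

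First I would dispose of the case $\abs{I} > 1$ or $\abs{\Lambda} > 1$. Here Corollary~\ref{girth} applies directly: if $\commgraph{\Rees{G}{I}{\Lambda}{P}}$ contains any cycle then (by part~(2), since the no-cycle case of part~(1) is excluded) its girth is exactly $3$. So in this case the conclusion is immediate, and no real work is needed.

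The main case is therefore $\abs{I} = \abs{\Lambda} = 1$, where $\Rees{G}{I}{\Lambda}{P} \simeq G$, so $\commgraph{S} \simeq \commgraph{G}$. Since $S$ is assumed non-commutative, $G$ must be non-abelian. The statement then reduces precisely to Proposition~\ref{girth commuting graph of a group}: if $\commgraph{G}$ contains at least one cycle, then $\girth{\commgraph{G}} = 3$. Invoking that proposition finishes the argument.

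The point of the theorem is that it is essentially a repackaging: every difficult part has already been carried out, either in Corollary~\ref{girth} (the non-group case) or in Proposition~\ref{girth commuting graph of a group} (the group case, whose Case~2 with the element $x_1 x_3$ is the genuinely delicate step). The only thing to verify is that these two cases are exhaustive and that the hypotheses match up. In particular I would make sure that in the group case the non-commutativity of $S$ really does force $G$ to be non-abelian, so that Proposition~\ref{girth commuting graph of a group} is applicable; this is the one small logical check that makes the reduction clean. Thus I do not expect any serious obstacle — the work lies entirely in correctly splitting into the Rees matrix construction and citing the right earlier result in each branch.
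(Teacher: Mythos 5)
Your proposal is correct and follows exactly the paper's own proof: reduce to a Rees matrix semigroup, handle the singleton case via Proposition~\ref{girth commuting graph of a group} (noting non-commutativity of $S$ forces $G$ non-abelian), and handle the case $\abs{I}>1$ or $\abs{\Lambda}>1$ via Corollary~\ref{girth}. No differences worth noting.
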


\begin{proof}
    There exist a group $G$, index sets $I$ and $\Lambda$, and a $\Lambda \times I$ matrix $P$ with entries from $G$ such that $S\simeq\Rees{G}{I}{\Lambda}{P}$. We consider two cases.

    First, we assume that $\abs{I}=\abs{\Lambda}=1$.  Then the graphs $\commgraph{\Rees{G}{I}{\Lambda}{P}}$ and $\commgraph{G}$ are isomorphic. Since $\commgraph{G}$ contains at least one cycle, it follows from Proposition~\ref{girth commuting graph of a group} that $\girth{\commgraph{G}}=3$, which implies that $\girth{\commgraph{S}}=3$.

    Now, assume that $\abs{I}>1$ or $\abs{\Lambda}>1$. Since $\commgraph{\Rees{G}{I}{\Lambda}{P}}$ contains cycles, then Corollary~\ref{girth} guarantees that $\girth{\commgraph{\Rees{G}{I}{\Lambda}{P}}}=3$. Thus $\girth{\commgraph{S}}=3$.
\end{proof}

We observe that it is also possible for a commuting graph of a completely simple semigroup not to have any cycles. In fact, Corollary~\ref{girth} shows that the commuting graphs of the completely simple semigroups $\Rees{G}{I}{\Lambda}{P}$ --- where $G$ is a group such that $\abs{G}\leqslant 2$, $I$ and $\Lambda$ are two index sets such that at least one of them is not a singleton, and $P$ is a $\Lambda \times I$ matrix whose entries are elements of $G$ --- have no cycles.

\begin{corollary}\label{chromatic number completely simple semigroup}
    For each $n\in\mathbb{N}$, there is a completely simple semigroup whose commuting graph has chromatic number equal to $n$.
\end{corollary}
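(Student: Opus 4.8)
The plan is to mirror the structure of Corollary~\ref{clique number completely simple semigroup}, exploiting the fact that we have already computed the chromatic number of $\commgraph{\Rees{G}{I}{\Lambda}{P}}$ in Theorem~\ref{chromatic number}. The key observation is that part~(1) of that theorem tells us that when $G$ is abelian and at least one of $I$, $\Lambda$ is a non-singleton, then $\chromaticnumber{\commgraph{\Rees{G}{I}{\Lambda}{P}}}=\abs{G}$. So to realize an arbitrary target value $n$, I would choose $G$ to be an abelian group of order exactly $n$.

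The concrete construction I would carry out is as follows. Fix $n\in\mathbb{N}$ and take $G=C_n$, the cyclic group of order $n$, which is abelian with $\abs{C_n}=n$. Choose index sets $I$ and $\Lambda$ with at least one of them not a singleton (for instance $\abs{I}=2$, $\abs{\Lambda}=1$), and let $P$ be any $\Lambda\times I$ matrix with entries in $C_n$. By Proposition~\ref{center M[G;I,^,P] non-commutative} the semigroup $\Rees{C_n}{I}{\Lambda}{P}$ is non-commutative, so its commuting graph is well defined and non-empty. Since $\Rees{C_n}{I}{\Lambda}{P}$ is a Rees matrix semigroup over a group, it is completely simple. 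Finally, applying part~(1) of Theorem~\ref{chromatic number} (the abelian case), I get $\chromaticnumber{\commgraph{\Rees{C_n}{I}{\Lambda}{P}}}=\abs{C_n}=n$, which is exactly what is required.

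I do not anticipate any real obstacle here, as the result is an almost verbatim analogue of Corollary~\ref{clique number completely simple semigroup}, with Theorem~\ref{clique number} replaced by Theorem~\ref{chromatic number}; the heavy lifting has already been done in establishing that the chromatic number of the commuting graph equals $\abs{G}$ in the abelian case. The only point that deserves a moment's care is the boundary behaviour for very small $n$: one should check that the argument does not secretly require $G$ to be non-trivial. For $n=1$ the group $C_1$ is trivial, yet because $I$ or $\Lambda$ is a non-singleton the semigroup is still non-commutative and each $\mathcal{H}$-class is a single vertex, so every connected component is $K_1$ and the chromatic number is indeed $1=\abs{C_1}$; Theorem~\ref{chromatic number}(1) already covers this uniformly, so no separate case analysis is needed. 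The proof is therefore short and self-contained once the earlier machinery is invoked.
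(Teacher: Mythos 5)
Your proposal is correct and follows essentially the same route as the paper: choose the cyclic group $C_n$, index sets with at least one non-singleton, invoke Proposition~\ref{center M[G;I,^,P] non-commutative} for non-commutativity, and apply Theorem~\ref{chromatic number}(1) to conclude $\chromaticnumber{\commgraph{\Rees{C_n}{I}{\Lambda}{P}}}=n$. Your extra check of the $n=1$ boundary case is a harmless refinement that the paper handles implicitly through the same theorem.
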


\begin{proof}
    Let $n\in\mathbb{N}$. Consider the cyclic group $C_n$ of order $n$, which is an abelian group. Let $I$ and $\Lambda$ be two index sets with at least one being a non-singleton, and let $P$ be a $\Lambda \times I$ matrix such that all its entries belong to $C_n$. It follows from Proposition~\ref{center M[G;I,^,P] non-commutative} that $\Rees{C_n}{I}{\Lambda}{P}$ is not commutative. Additionally, from Theorem~\ref{chromatic number}  we have $\chromaticnumber{\commgraph{\Rees{C_n}{I}{\Lambda}{P}}}=\abs{C_n}=n$.
\end{proof}

The corollary below is an immediate consequence of Theorem~\ref{left paths and knit degree}.

\begin{corollary}\label{knit degree completely simple semigroup}
    Let $S$ be a non-commutative completely simple semigroup. Then $\commgraph{S}$ has no left paths.
\end{corollary}

Notice that, due to the fact that the commuting graph of a completely simple group never has left paths, there are no possible values for the knit degree of those semigroups.

The last result of this section is a characterization of the commuting graph of a completely simple semigroup. This description is made using the commuting graphs and the extended commuting graphs of groups.

\begin{theorem}\label{characterization commuting graph competely simple semigroup}
    A simple graph $\mathcal{G}$ is the commuting graph of a (finite non-commutative) completely simple semigroup if and only if one of the following conditions is verified:
    \begin{enumerate}
        \item $\mathcal{G}$ is the commuting graph of a group.
        \item $\mathcal{G}$ has more than one connected component and all of them are isomorphic to the extended commuting graph of a group.
    \end{enumerate}
\end{theorem}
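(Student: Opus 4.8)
The plan is to prove both directions of the biconditional, using Theorem~\ref{connectedness and diameter} as the central structural fact. First I would establish the forward direction. Suppose $\mathcal{G}=\commgraph{S}$ for a finite non-commutative completely simple semigroup $S$. By Theorem~\ref{completely simple semigroup <=> Rees matrix construction}, we may write $S\simeq\Rees{G}{I}{\Lambda}{P}$ for some finite group $G$ and finite index sets $I,\Lambda$. I would split into the two cases governed by the sizes of $I$ and $\Lambda$. If $\abs{I}=\abs{\Lambda}=1$, then $\Rees{G}{I}{\Lambda}{P}\simeq G$ (noted already in the excerpt), so $\mathcal{G}\simeq\commgraph{G}$ and condition $(1)$ holds. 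If instead $\abs{I}>1$ or $\abs{\Lambda}>1$, then Theorem~\ref{connectedness and diameter} tells us $\commgraph{\Rees{G}{I}{\Lambda}{P}}$ is disconnected with exactly $\abs{I}\cdot\abs{\Lambda}>1$ connected components, each isomorphic to $\extendedcommgraph{G}$; hence condition $(2)$ holds.

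For the converse I would show each of the two conditions is realized by some completely simple semigroup. If $(1)$ holds, then $\mathcal{G}=\commgraph{G}$ for some finite non-abelian group $G$; since a group is itself completely simple (take $\abs{I}=\abs{\Lambda}=1$), $\mathcal{G}$ is the commuting graph of a completely simple semigroup, and we are done. The substantive case is $(2)$: here $\mathcal{G}$ has $m>1$ connected components, each isomorphic to $\extendedcommgraph{G}$ for a single fixed group $G$. The idea is to build a Rees matrix semigroup whose commuting graph is exactly this. I would choose index sets $I,\Lambda$ with $\abs{I}\cdot\abs{\Lambda}=m$ and at least one of them a non-singleton (for instance $\abs{I}=m$, $\abs{\Lambda}=1$), and let $P$ be any $\Lambda\times I$ matrix over $G$. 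Then $\Rees{G}{I}{\Lambda}{P}$ is completely simple and, by Proposition~\ref{center M[G;I,^,P] non-commutative}, non-commutative; applying Theorem~\ref{connectedness and diameter} again, its commuting graph is a disjoint union of exactly $m$ copies of $\extendedcommgraph{G}$, which is isomorphic to $\mathcal{G}$.

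The main obstacle I anticipate is a subtle well-definedness issue in condition $(2)$: the statement says all components are isomorphic to \emph{the extended commuting graph of a group}, but one must be careful that $\extendedcommgraph{G}$ genuinely arises as an extended commuting graph for the \emph{same} group $G$ realizing all components, and that such a $G$ exists whenever the combinatorial condition is met. This is automatic in our reconstruction because we are handed the group $G$ in the hypothesis of $(2)$, so the only thing to verify is that we can match the number of components $m$ using a product $\abs{I}\cdot\abs{\Lambda}$ with at least one factor exceeding $1$ --- which is always possible (take $\abs{I}=m\geqslant 2$, $\abs{\Lambda}=1$). A secondary point requiring care is confirming that every disconnected graph of the required form forces $\abs{I}>1$ or $\abs{\Lambda}>1$ in the forward direction, so that the two conditions are genuinely exclusive and exhaustive: the group case yields a connected graph precisely when $\commgraph{G}$ is connected, whereas $(2)$ is disconnected by construction, so no ambiguity arises in classifying a given $\mathcal{G}$.
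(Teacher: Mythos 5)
Your proof is correct and follows essentially the same route as the paper: the same two-case split on $\abs{I},\abs{\Lambda}$ via Theorem~\ref{connectedness and diameter} for the forward direction, and the same Rees matrix construction realizing $m$ components for the converse (your choice $\abs{I}=m$, $\abs{\Lambda}=1$ is just the symmetric twin of the paper's $\abs{I}=1$, $\abs{\Lambda}=m$). The closing discussion of exclusivity of the two conditions is harmless but unnecessary, since the theorem only asserts a disjunction.
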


\begin{proof}
    We begin by proving the forward implication. Suppose that $\mathcal{G}=\commgraph{S}$ for some completely simple semigroup $S$. Then there exist a group $G$, index sets $I$ and $\Lambda$, and a $\Lambda\times I$ matrix $P$ with entries belonging to $G$ such that $S\simeq \Rees{G}{I}{\Lambda}{P}$. We consider the following two cases.

    Case $1$: Assume that $\abs{I}=\abs{\Lambda}=1$. Then $\Rees{G}{I}{\Lambda}{P}\simeq G$ and, consequently, $S$ is a group.

    Case $2$: Assume that $\abs{I}>1$ or $\abs{\Lambda}>1$. As a result of Theorem~\ref{connectedness and diameter}, $\commgraph{\Rees{G}{I}{\Lambda}{P}}$ has $\abs{I}\cdot\abs{\Lambda}>1$ connected components, which are all isomorphic to $\extendedcommgraph{G}$. Hence $\mathcal{G}$ is isomorphic to a graph that has more than one connected component, and all of them are all isomorphic to $\extendedcommgraph{G}$.

    Now we are going to prove the reverse implication. If $\mathcal{G}=\commgraph{G}$ for some group $G$ then, since groups are completely simple semigroups, the result follows. Now assume that $\mathcal{G}$ has $m\geqslant 2$ connected components, which are all isomorphic to $\extendedcommgraph{G}$ for some group $G$. Let $I$ and $\Lambda$ be two sets such that $\abs{I}=1$ and $\abs{\Lambda}=m$, and let $P$ be a $\Lambda\times I$ matrix whose entries belong to $G$. Due to the fact that $\abs{\Lambda}=m>1$, then it follows from Theorem~\ref{connectedness and diameter} that $\commgraph{\Rees{G}{I}{\Lambda}{P}}$ has $\abs{I}\cdot\abs{\Lambda}=m$ connected components, all of which are isomorphic to $\extendedcommgraph{G}$.  Thus $\mathcal{G}$ and $\commgraph{\Rees{G}{I}{\Lambda}{P}}$ are isomorphic and $\mathcal{G}$ is isomorphic to the commuting graph of a completely simple group.
\end{proof}

We observe that, given the description of extended commuting graphs provided by Lemma~\ref{commgraph and extended commgraph}, Theorem~\ref{characterization commuting graph competely simple semigroup} can be rewritten in the following way: a simple graph $\mathcal{G}$ is the commuting graph of a (finite non-commutative) completely simple semigroup if and only if one of the following conditions is verified:
\begin{enumerate}
        \item $\mathcal{G}$ is the commuting graph of a group.
        \item $\mathcal{G}$ has more than one connected component and all of them are isomorphic to the complete graph $K_n$ for some $n\in\mathbb{N}$.
        \item $\mathcal{G}$ has more than one connected component and all of them are isomorphic to $K_{\abs{Z\parens{G}}}\graphjoin\commgraph{G}$ for some group $G$.
    \end{enumerate}

    Let $m\in\mathbb{N}$ be such that $m \geqslant 2$. In the proof of Theorem~\ref{characterization commuting graph competely simple semigroup} we constructed a completely simple semigroup whose commuting graph has $m$ connected components, all of which are isomorphic to $\extendedcommgraph{G}$, for some finite group $G$. We could have constructed a different completely simple semigroup whose commuting graph is isomorphic to the one previously described. Let $n, n'\in \mathbb{N}$ be such that $m=nn'$. Since $m\geqslant 2$, then $n\geqslant 2$ or $n'\geqslant 2$. Assume, without loss of generality, that $n\geqslant 2$. Let $I'$ and $\Lambda'$ be two sets such that $\abs{I'}=n$ and $\abs{\Lambda'}=n'$, and let $P'$ be a $\Lambda'\times I'$ matrix whose entries belong to $G$. By Theorem~\ref{connectedness and diameter}, $\commgraph{\Rees{G}{I'}{\Lambda'}{P'}}$ has $\abs{I'}\cdot\abs{\Lambda'}=nn'=m$ connected components, all of which are isomorphic to $\extendedcommgraph{G}$. Even though the completely simple semigroup constructed in the proof of Theorem~\ref{characterization commuting graph competely simple semigroup} and $\Rees{G}{I'}{\Lambda'}{P'}$ are not isomorphic, their commuting graphs are isomorphic.

    \bibliography{Commuting_graphs_of_completely_simple_semigroups} 
\bibliographystyle{alphaurl}

\end{document}